\titleformat{\subsection}[runin]{\normalfont\bfseries}{\thesubsection.}{.5em}{}[.~ ]
\titlespacing{\subsection}{0pt}{1.5ex plus .1ex minus .2ex}{0pt}
\theoremstyle{plain}
\newtheorem{thm}[equation]{Theorem}
\newtheorem{lemma}[equation]{Lemma}
\newtheorem{prop}[equation]{Proposition}
\newtheorem{cor}[equation]{Corollary}
\theoremstyle{definition}
\newtheorem{defi}[equation]{Definition}
\newtheorem{rmk}[equation]{Remark}
\newcommand{\pp}{\pmb{p}}
\newcommand{\dd}{\mathrm{d}}
\newcommand{\tr}{\mathrm{\mathrm{tr}}}
\newcommand{\SU}{\mathrm{\mathrm{SU}}}
\newcommand{\PU}{\mathrm{\mathrm{PU}}}
\newcommand{\SO}{\mathrm{\mathrm{SO}}}
\newcommand{\OO}{\mathrm{\mathrm{O}}}
\newcommand{\real}{\mathrm{\mathrm{Re}\,}}
\newcommand{\imag}{\mathrm{\mathrm{Im}\,}}
\newcommand{\PP}{\mathbb{P}}
\newcommand{\HH}{\mathbb{H}}
\newcommand{\BB}{\mathbb{B}}
\newcommand{\CC}{\mathbb{C}}
\newcommand{\ZZ}{\mathbb{Z}}
\newcommand{\KK}{\mathbb{K}}
\newcommand{\SP}{\mathbb{S}}
\newcommand{\RR}{\mathbb{R}}
\newcommand{\QQ}{\mathbb{Q}}
\newcommand{\orb}{\mathrm{orb}}
\newcommand{\TT}{\mathrm{T}}
\newcommand{\pf}{\mathrm{pf}}
\newcommand{\stab}{\,\mathrm{stab}}
\title{\vspace{-15mm}\fontsize{16pt}{10pt}\selectfont\textbf{Orbifolds and orbibundles in complex hyperbolic geometry}}
\author{
	\large
	\textsc{Hugo Cattarucci Botós}\thanks{Supported by S\~ao Paulo Research Foundation (FAPESP)}\\
	\normalsize \href{}{hugocbotos@usp.br}\\
	\normalsize{Departamento de Matem\'atica, ICMC, Universidade de S\~ao Paulo, S\~ao Carlos, Brasil}	
	\vspace{-5mm}
}
\date{}
\begin{document}
	
\maketitle

\begin{center}
	\large\textbf{Abstract}
\end{center}
	We develop the theory of orbibundles from a geometrical viewpoint using diffeology. One of our goals is to present new tools allowing to calculate invariants of complex hyperbolic disc orbibundles over $2$-orbifolds appearing in the geometry of $4$-manifolds. These invariants are the Euler number of disc orbibundles and the Toledo invariant of $\PU(2,1)$-representations of $2$-orbifold groups. 

	\section{Introduction}
	We study orbibundles via diffeology (following the suggestion in \cite[pag.\,94]{igl2}). Using this framework, we describe essential invariants appearing in complex hyperbolic geometry and prove orbifold generalizations of some classical results in the area. These new tools were developed while investigating the complex variant of the Gromov-Lawson-Thurston conjecture.
	
	A Riemannian manifold $N$ is uniformized by a simply connected complete Riemannian manifold $M$ if there is a Riemannian covering of $N$ by $M$. Uniformization plays an important role in classifying manifolds in dimensions $2$ and $3$ respectively via uniformization of Riemann surfaces and Thurston's geometrization conjecture (proved by Perelman in 2006). 
	
	Uniformization in dimension $4$ is far from being well understood. In this regard, it is natural to investigate disc bundles over surfaces, one of the simplest types of $4$-manifolds. Along these lines stands the {\bf Gromov-Lawson-Thurston conjecture:} An oriented disc bundle $M \to B$ over an oriented, connected, compact surface $B$ with negative Euler characteristic is uniformized by the real hyperbolic space $\HH_\RR^4$ (i.e., the usual hyperbolic $4$-space) if, and only if, $|e_R(M)|\leq 1$, where $e_R(M)$ is the relative Euler number of the bundle, defined as the quotient of the Euler number $e(M)$ of the disc bundle by the Euler characteristic $\chi(B)$ of the surface (see \cite{GLT}). Observe that the relative Euler number is preserved under pullbacks of $M$ by finite covers of $B$. Both directions of the conjecture are open (see \cite{ach}, \cite{GLT}, \cite{kap1}, and \cite{kui1} for details).
	
	The Euler number of an oriented disc bundle $M \to B$ is the oriented intersection number (see~\cite[Chapter 3]{pollack}) of two transversal sections and the Euler characteristic is the Euler number of the tangent bundle of the surface. Since an oriented disc bundle $M\to B$ is determined, up to isomorphism, by its Euler number, the relative Euler number measures how different from the tangent bundle $\mathrm{T}B \to B$ the disc bundle $M\to B$ is.
	Up to bundle isomorphism, there are three distinguished cases: the tangent, the cotangent, and the trivial disc bundles over $B$. The relative Euler numbers are respectively $1,-1,0$.
	  
	In 2011, new examples of complex hyperbolic disc bundles $M\to B$ were discovered \cite{discbundles}. Complex hyperbolic means that the total space $M$ is uniformized by the complex hyperbolic plane~$\HH_\CC^2$ (see Subsection \ref{subsection complex hyperbolic geometry}). They satisfy $|e_R|\leq 1$ and, therefore, support the {\bf complex GLT-conjecture} (same statement with $\HH_\CC^2$ in place of $\HH_\RR^4$). A large number of examples backing the conjecture can also be found in \cite{GKL}, \cite{sashanicolai}, and \cite{bgr}, including the above distinguished cases. The fact that the statement of the conjecture is the same in both cases may be seen as evidence that the conjecture is more about negative curvature than about constant negative curvature.
	
	In this paper, we develop a theory of $\SP^1$-orbibundles over compact oriented $2$-orbifolds via diffeology and use it to introduce the concept of Euler number for oriented vector orbibundles of rank~$2$. The tools obtained here are essential for calculating the Euler number of the disc orbibundles constructed in \cite{bgr} and provide a general framework that may also be applied to the examples obtained in \cite{discbundles} and \cite{sashanicolai}. At the core of these calculations and of the technology developed here lies Lemma \ref{circle section lemma} linking the geometry and the topology of $\SP^1$-orbibundles.
	
	At first glance, creating a framework for calculating the Euler number at the level of orbibundles may seem unnecessary since in the previous works \cite{discbundles} and \cite{sashanicolai} the authors were able to avoid such an approach. However, in their examples, the involved orbifolds $B$ were particularly simple and it was therefore possible to explicitly describe compact oriented surfaces covering them; through the relations between the fundamental domains of the orbifold and of such surfaces, the authors were able to calculate the Euler number by reducing the problem to the case of a disc bundle over surface. In~\cite{bgr}, on the other hand, this method does not work because there are no special explicit surfaces to rescue us (clearly, by Selberg's lemma, there exist finite covers by surfaces of the orbifolds used in~\cite{bgr}, but there is no practical way of determining them). Hence, we found it necessary to develop the technology presented in this article.
	
	Since the fibers of a complex hyperbolic disc bundle $M\to B$ over a surface are contractible, the fundamental groups of $B$ and $M$ are equal and $\pi_1(M)$ (viewed as a deck group) is a subgroup of the orientation preserving isometry group $\PU(2,1)$ of $\HH_\CC^2$ because $M$ is uniformized by $\HH_\CC^2$. Therefore, we have a discrete faithful representation $\rho:\pi_1(B) \to \PU(2,1)$ and $M$ is isometric to~$\HH_\CC^2/\pi_1(B)$. Isometric complex hyperbolic disc bundles correspond to conjugated representations. With that in mind, we can view complex hyperbolic disc bundles (up to isometry of bundles) over the surface $B$ as points in the $\PU(2,1)$-character variety of $B$ (that is, the space of all $\PU(2,1)$-representations of $\pi_1(B)$ modulo conjugation). It is worthwhile mentioning that character varieties are the essential geometrical objects in Higher Teichmüller theory and we believe that the methods developed here can be of use in some settings of that science as well.
	
	It is also important to point out that this work led us to a perspective shift. Originally, our objective in \cite{bgr} was to produce disc bundles over surfaces with complex hyperbolic structures as in \cite{discbundles} and \cite{sashanicolai}. Nevertheless, it is actually not desirable to reduce the examples found at the orbifold level to the surface level by pulling back the disc orbibundle to a disc bundle over a surface. By doing that, we lose track of the $\PU(2,1)$-character variety we are dealing with, thus losing information (for instance, rigid representations may become flexible). The techniques developed here allow us to deal with orbibundles on their own.
	
	To work with complex hyperbolic geometry on orbibundles, we slightly generalize the usual definition of orbibundle (see Subsection \ref{subsection: orbibundles}) and introduce the concept of an {\bf orbigoodle} (see Definition \ref{definition: orbigoodle}), a natural class of orbibundles over good orbifolds. The latter concept is well-behaved under pullbacks and enables a Chern-Weil theory for vector orbigoodles (see Theorem~\ref{euler thm} and Section \ref{chernweil}). Analytical expressions for the Euler number of rank $2$ vector orbigoodles via Chern-Weil theory are given in Theorems \ref{chern=euler2} and \ref{chern=euler}.
	
	Besides the Euler number, there is another important invariant associated to a complex hyperbolic disc orbigoodle $M\to B$, called the {\bf Toledo invariant} (see Definition \ref{toledo invariant}). It is a real-valued function $\tau$ defined on the $\PU(2,1)$-character variety of $B$ (see Definition \ref{character variety}). As the relative Euler number, the {\bf relative Toledo invariant} given by $\tau_R:=\tau/\chi(B)$ is also preserved under finite covers of $B$. When $B$ is a surface, D. Toledo proved that $|\tau_R|\leq 1$ as well as the famous Toledo rigidity: a representation $\rho:\pi_1(B) \to\PU(2,1)$ is maximal, i.e., the identity $|\tau_R(\rho)|= 1$ holds, if, and only if, there is a complex geodesic (see Subsection \ref{subsection complex hyperbolic geometry}) in $\HH_\CC^2$ stable under action of $\rho$. We give a proof of the Toledo rigidity for $2$-orbifolds (see Theorem \ref{toledo rigidity}). It is interesting to point out that the maximality of the Toledo invariant of a representation $\rho:\pi_1^\orb(B) \to \PU(2,1)$ implies that the representation $\rho$ is discrete \cite[Theorem 4.1]{biw}.
	
	We also show that the Toledo invariant of a representation $\rho:\pi_1^\orb(B) \to \PU(2,1)$ belongs to~$\frac 23\big(\ZZ+\frac1{m_1}\ZZ+ \cdots +\frac1{m_n}\ZZ\big)$
	for a $2$-orbifold $B$ whose singularities are conic points of angles $2\pi/m_1,\ldots ,2\pi/m_n$ (see Corollary \ref{discretetau}). When $B$ is a surface we recover the integrality property proved in \cite{GKL}. Note that this result implies the discreteness of the Toledo invariant. For surfaces, the Toledo invariant indexes the connected components of the character variety (see \cite[Theorem~1.1]{xia}): different connected components have different Toledo invariants. We conjecture the same for $2$-orbifolds.
	
	All examples found in \cite{discbundles}, \cite{sashanicolai}, and \cite{bgr} satisfy
	\begin{equation}\label{holomorphic section identity}
	\frac{3}{2}\tau_R(\rho) = e_R(M)+1,
	\end{equation}
	where $\rho$ is the representation associated to a complex hyperbolic disc bundle $M\to B$. The other non-trivial known examples of complex hyperbolic disc bundles are the ones found in \cite{GKL}, which satisfy $\frac 32 \tau_R(\rho)\leq e_R(M)+1.$
	
	It is known (see \cite{discbundles} or Corollary \ref{holomorphicsectionidentity}) that the existence of a holomorphic section of a complex hyperbolic disc bundle implies the identity \eqref{holomorphic section identity}. By holomorphic section (when $B$ is a surface) we mean a section $\sigma:B \to M$ that, viewed as a submanifold of $M$ (which is a complex manifold), is a Riemann surface. The existence of holomorphic sections for the examples in \cite{discbundles} was proved by Misha Kapovich (see \cite[Example 8.9]{kap3}). Nevertheless, his technique does not seem to work for the examples in \cite{bgr} because the complex hyperbolic disc orbigoodles we found are not rigid (they form $2$-dimensional regions in the corresponding character variety which means these orbigoodles, while equal as smooth orbigoodles, are geometrically distinct). Inspired by Toledo rigidity and based on the above observations, Carlos Grossi suggests the following conjecture: Given a complex hyperbolic disc bundle $M \to B$ over a compact oriented surface with negative Euler characteristic, we have
	$$\frac{3}{2}\tau_R(\rho) \leq e_R(M)+1$$
	and the equality holds if, and only if, there is a holomorphic disc embedded in $\HH_\CC^2$ stable under action of $\rho$. 
	
	The theory of orbibundles is well-known in geometry and is usually approached from the Lie groupoid perspective (see \cite{alr} and \cite{ame}). Nevertheless, we won't follow this path for two reasons: 1) the way we empirically discovered the formula for the Euler number of the orbibundles in \cite{bgr} does not have an algebraic flavor; it is actually low-tech and very geometric in nature, not fitting the language of Lie groupoids and 2) we plan to generalize the technology developed here for spaces much more singular than orbibundles because we believe there is a correspondence between the faithful part of the $\PU(2,1)$-character variety of hyperbolic spheres with $3$ conic points $B$ (see Figure \ref{pinchedorbifold}) and some very singular ``bundles''. Roughly speaking, the (wild) speculations are the following: for each faithful representation $\rho$ in the $\PU(2,1)$-character variety of $B$, we can find a ``polyhedron'' $Q\subset\HH_\CC^2$ (analogous to the actual polyhedra appearing as fundamental domains for disc orbigoodles in \cite{bgr}). The representation $\rho$ provides gluing relations between the sides of $Q$ and by taking the quotient of $Q$ by these relations we obtain a diffeological space $M$. The space~$Q$ can be seen as an immersion of $D\times P$ in $\HH_\CC^2$, where $D$ is a disc and $P$ is a fundamental domain for the hyperbolic sphere with three conic points. By gluing the sides of $Q$ we also glue the sides of the immersed fundamental domain $P$, obtaining a ``pinched orbifold'' (see Figure \ref{pinchedorbifold}). We believe the quotient $M$ to be, in some sense, a ``disc bundle'' over this pinched orbifold. Our suspicions come from computational observations: the formula for the Euler number in \cite{bgr} is well-defined and the identity \eqref{holomorphic section identity} holds even when the representation $\rho$ is no longer discrete. With these strange bundles in mind, it seemed appropriate to approach the subject through diffeology, a differential geometry like science that deals with very singular spaces.
	\begin{figure}[H]
		\centering
		\begin{minipage}{.5\textwidth}
			\centering
			\includegraphics[scale = .5]{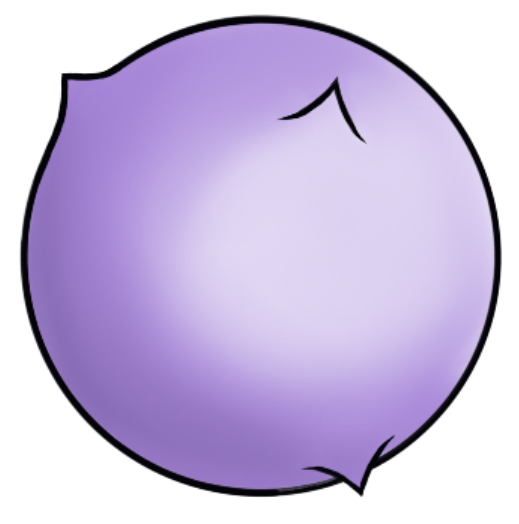}
		\end{minipage}%
		\begin{minipage}{.5\textwidth}
			\centering
			\includegraphics[scale =0.5]{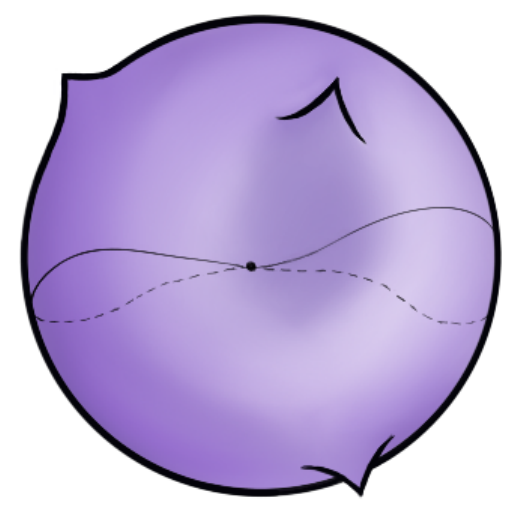}
		\end{minipage}
		\caption{ Orbifold and its pinched version.}
		\label{pinchedorbifold}
	\end{figure}

	 {\bf Acknowledgments:}	Thank you Carlos H.~Grossi and Sasha Anan'in for converting me to geometry.

	\section{Preliminaries}
	\subsection{Orbifolds as diffeological spaces}
	Orbifolds are the simplest type of singular spaces. They are almost like smooth manifolds, but allow singularities. In the case we are interested in (compact and oriented two-dimensional orbifold with only isolated singularities), orbifolds are, vaguely speaking, topological surfaces that from the smooth perspective are locally Euclidean except around the singular points, where the smooth structure is modeled by cones. The classical approach to orbifolds can be found, for instance, in \cite{sco}, \cite[Chapter 6]{kap2}, and \cite[Chapter~13]{thu}.

	In order to develop the theory of ``bundles'' over orbifolds we are interested in, the language of diffeology will come in handy. Since this language is not commonly used by hyperbolic geometers, we state some basic definitions. For a more complete treatment, see \cite{igl1} and \cite{igl2}.
	
	We will call an open set $U\subset\RR^n$ an Euclidean open set (note that $\RR^0=0$).
	
	\begin{defi}
		Let $M$ be a set. A {\bf diffeology} on $M$ is a family $\mathcal F$ of functions from Euclidean open sets to $M$ such that:
		\begin{itemize}
			\item every map $0 \to M$ belongs to $\mathcal F$;
			\item if $\phi:U \to M$ belongs to $\mathcal F$ and $g:V \to U$ is smooth, where $V$ is an Euclidean open set, then $\phi \circ g \in \mathcal F$;
			\item if we have a function $\phi:U \to M$ and every point $x\in U$ has a neighborhood $U_x \subset U$ such that $\phi|_{U_x} \in \mathcal F$, then $\phi\in \mathcal F$.
	\end{itemize} 
	
		The pair $(M,\mathcal F)$ is a {\bf diffeological space}. The elements of $\mathcal F$ are called {\bf plots} of $M$.
	\end{defi}
	Structurally, the definition of a diffeology resembles that of a topology. In particular, every set $M$ admits two extreme diffeologies:
	
	\begin{itemize}
		\item The {\bf discrete diffeology}: the set of all functions from Euclidean open sets to M that are locally constant.
		\item The {\bf indiscrete diffeology}: the set of all functions from Euclidean open sets to M. 
	\end{itemize} 
	
	A map $f:M_1 \to M_2$ is {\bf smooth} if, for every plot $\phi:U \to M_1$ of $M_1$, the map $f \circ \phi: U \to M_2$ is a plot of $M_2$. In particular, plots are smooth. Therefore, we have the category of diffeological spaces where the objects are diffeological spaces and the morphisms are the smooth maps. A diffeomorphism is an isomorphism in this category. 
	
	The basic constructions we will need are motivated by the ideas of {\bf initial and final diffeologies} (analogous to the concepts of initial and final topologies). Given a set $M$ and a family of diffeological spaces $M_\alpha$ with maps $\pi_\alpha:M \to M_\alpha$, the initial diffeology on $M$ is the largest one that turns each $\pi_\alpha$ into a smooth map. The dual definition is the final diffeology, i.e., given a set $M$ and a family of diffeological spaces $M_\alpha$ with maps $i_\alpha: M_\alpha \to M$, the final diffeology of $M$ is the smallest one such that each $i_\alpha$ is smooth.
	
	Some basic constructions arising from initial diffeologies include the subspace diffeology and the product diffeology, given respectively by the inclusion and the projection maps. Final diffeologies provide, for example, the quotient and the coproduct diffeologies, given respectively by the quotient map and the natural inclusions in the disjoint union.
 
    The natural topology on a diffeological space $M$, known as the D-topology, is the largest topology that renders all plots of $M$ continuous.

	\begin{rmk} The category of (finite-dimensional) smooth manifolds does not have arbitrary pullbacks (transversality is required) and quotients. Furthermore, the space of smooth maps between manifolds as well as diffeomorphism groups are not smooth manifolds. On the other hand, the category of diffeological spaces is bicomplete and it is Cartesian closed; in particular, the space of smooth maps between diffeological spaces is a diffeological space, and diffeomorphism groups are diffeological groups. Moreover, the theories of bundles and differential forms are very simple and practical in this language.
	\end{rmk}
	
	With these basics definitions, we can define an $n$-dimensional smooth manifold as a Hausdorff and second countable diffeological space locally diffeomorphic to open subsets of $\RR^n$, i.e., given $x \in M$, there is a neighborhood $D$ of $x$ in the diffeological topology of $M$, such that $D$ is diffeomorphic to an open subset of $\RR^n$. Similarly, we have orbifolds:
	
	\begin{defi}\label{defiorbifold} An {\bf$n$-dimensional orbifold $B$} is a Hausdorff and second countable diffeological space locally diffeomorphic to $\BB^n/\Gamma$, where $\BB^n \subset \RR^n$ is the unit open $n$-ball centered at $0$ and $\Gamma$ is a finite subgroup of the orthogonal group $\OO(n)$. We call such diffeomorphism $\phi:\BB^n/\Gamma \to D$, where $D$ is an open subset of $M$, an orbifold chart and say the chart $\phi$ is centered at $x$ if $\phi\big([0]\big) =x$. An orbifold is {\bf locally orientable} if it is locally modeled on $\BB^n/\Gamma$ with $\Gamma\subset \SO(n)$. 
	\end{defi}
	For a detailed treatment of orbifolds as diffeological spaces see \cite{IKZ}.
	 
	All orbifolds in this article are supposed to be locally orientable because we do not allow non-isolated singularities in two-dimensional orbifolds.
	
	\begin{defi} Consider an orbifold $B$. We say $x \in B$ is a {\bf singular point} if there is a chart $\phi:\BB^n/\Gamma \to D$ centered at $x$ and $\Gamma \neq \{1\}$. If a point is non-singular, we call it regular. Since we are dealing with locally orientable orbifolds, $B$ is connected if, and only if, $B\setminus S$ is connected, where $S$ is the set of all singular points of $B$. We say $B$ is an {\bf oriented orbifold} if $B\setminus S$ is oriented. 
	\end{defi}

	\begin{rmk} We will need the concepts of orbifold fundamental group and orbifold universal cover. For details, the reader may take a look at \cite[Chapter 6]{kap2} and \cite[Chapter 13]{thu}. We will also use the concept of a good orbifold, which is an orbifold covered (in the sense of orbifolds) by manifolds. In this case, an orbifold $B$ can be seen as the quotient of a simply connected manifold $\HH$ by a group $G$ acting properly discontinuously on $\HH$, and $\pi_1^\orb(B) = G$.
	\end{rmk}

	\subsection{Complex hyperbolic geometry}\label{subsection complex hyperbolic geometry}
	In this subsection, we present a few basic facts about complex hyperbolic geometry that will be needed later. The reader is referred to \cite{coordinatefree} and \cite{goldmanbook} for details.
	
	Consider an $(n+1)$-dimensional complex vector space $V$ endowed with a Hermitian form $\langle \cdot,\cdot \rangle$ of signature $-+\cdots+$. The projective space $\PP(V)$ is divided into three parts, 
	$$
	\HH_\CC^n = \{\pp \in \PP(V): \langle p,p\rangle <0\}, \quad \mathrm{S}(V) =\{ \pp \in \PP(V): \langle p,p \rangle =0 \}, 
	$$
	$$\mathrm E(V) = \{\pp \in \PP(V): \langle p,p \rangle>0\},$$
	where we denote by $\pp$ a point in the projective space and by $p$ a representative of $\pp$ in $V$. The $n$-dimensional ball $\HH_\CC^n$ is called {\bf complex hyperbolic space}. We say the points of $\HH_\CC^n$, $\mathrm{S}(V)$ and~$\mathrm E(V)$ are negative, isotropic, and positive, respectively.
	  
	The tangent space of  $\TT_{\pp} \PP(V)$ is naturally identified with the space of linear transformations from $\CC p$ to $p^\perp:=\big\{v\in V: \langle p,v\rangle=0\big\}$  whenever $\pp$ is non-isotropic. Furthermore, over the non-isotropic region we define the {\bf Hermitian metric} $$h(s,t): = - \frac{\langle s(p),t(p)\rangle}{\langle p,p \rangle}$$
	for $t,s \in \TT_{\pp} \PP(V)$ (the definition does not depend on the representatives for $\pp$ because $t,s$ are~linear). The {\bf pseudo-Riemannian metric} $g$ and the {\bf Kähler form} $\omega$ are defined by $g:=\real h$ and $\omega:=\imag h$, respectively. Furthermore, the metric $g$ is complete and it is Riemannian on $\HH_\CC^n$. Hence, $\HH_\CC^n$ is a Kähler manifold.
	
	The {\bf Riemann curvature tensor} $R$ of the Levi-Civita connection is
	\begin{equation}\label{riemanntensor}R_{\pp}(t_1,t_2)s=h_{\pp}(t_2,t_1)s+h_{\pp}(s,t_1)t_2-h_{\pp}(t_1,t_2)s - h_{\pp}(s,t_2)t_1
	\end{equation}
	for $t_1,t_2,s \in T_{\pp} \PP_(V)$.
	
	\begin{rmk} We use the definition $R(X,Y)Z := \nabla_X\nabla_Y Z - \nabla_Y\nabla_X Z- \nabla_{[X,Y]}Z$, which differs by a sign from the one in \cite{discbundles}.
	\end{rmk}
	
	From the above formula, it is easy to deduce that the Gaussian curvature of $\HH_\CC^1$ is $-4$, hence it is a Poincaré disc. Additionally, $E(V)$ is also a Poincaré disc with the same curvature. So the projective line $\PP_\CC^1$ is formed by two Poincaré discs glued along the boundary, and we call it a {\bf Riemann-Poincaré sphere}. 
	
	The {\bf complex hyperbolic plane} $\HH_\CC^2$ has non-constant sectional curvature varying on the interval $[-4,-1]$. Furthermore, given a positive point $\pp$, the projective line $\PP(p^\perp)$ is a Riemann-Poincaré sphere, and its intersection with $\HH_\CC^2$ is a {\bf complex geodesic}. Note that two distinct points in the complex hyperbolic plane determine a unique complex geodesic.
	
	It may seem strange that we do not scale the metric in order to obtain complex geodesics of curvature $-1$. The reason is that there exist Beltrami-Klein hyperbolic discs inside $\HH_\CC^2$ as well. They are given by $\HH_\CC^2\cap\PP(\RR e_1 \oplus \RR e_2 \oplus \RR e_3)$, where $e_1,e_2,e_3$ is an orthonormal basis of $V$. These discs have curvature $-1$, are called {\bf $\RR$-planes}, and are very different from complex geodesics: they are not uniquely determined by two points, they are not embedded Riemann surfaces, and are Lagrangian submanifolds (whereas complex geodesics are symplectic).
		
	The group of holomorphic isometries of $\HH_\CC^n$ is the group $\PU(n,1)$, the projectivization of the group $\SU(n,1)$ formed by the linear isomorphisms of $V$ preserving the Hermitian form.
	
	A non-identical isometry that fixes a point in $\HH_\CC^n$ is called an {\bf elliptic isometry}. An elliptic isometry can have a unique fixed point in $\HH_\CC^2$ or a fixed complex geodesic: if $\widehat I \in \SU(2,1)$ is a representative of an elliptic isometry $I$ (the representatives differ by a cube root of the unit), then~$\widehat I$ has an orthonormal basis $c,p,q$ consisting of eigenvectors, where $c$ is negative. The projective lines $\PP(\CC c \oplus \CC p)$, $\PP(\CC c \oplus \CC q)$, and $\PP(\CC p \oplus \CC q)$ are stable under the action of $I$, where the first two are Riemann-Poincaré spheres and the third is a round sphere (of positive points). If two of the eigenvectors have the same eigenvalue, then the projective line determined by them is fixed. So, an elliptic isometry has either a unique fixed point in $\HH_\CC^2$ or a unique fixed complex geodesic.
	
	\section{Orbibundles and the Euler number}
	\subsection{Orbibundles}\label{subsection: orbibundles}
    Let $M,F$ be diffeological spaces and let $B$ be an orbifold.  A smooth map $\zeta:M\to B$ is an {\bf orbibundle}  with fiber $F$ if for every point $p \in B$ there is an orbifold chart $\phi:\BB^n/\Gamma \to D$ centered at $p$ satisfying the following properties:
    \begin{itemize}
        \item there is a smooth action of $\Gamma$ on $\BB^n \times F$ of the form $h(x,f) = (hx,a(h,x)f)$, where\break $a: \Gamma \times \BB^n \to \mathrm{Diff}(F)$ is smooth and $\mathrm{Diff}(F)$ stands for the group of diffeomorphisms of~$F$ endowed with its natural diffeology (see \cite[Section 1.61]{igl2});
        \item there is a diffeomorphism $\Phi:(\BB^n \times  F)/\Gamma \to \zeta^{-1}(D)$ such that the diagram

		\begin{equation}\label{definition orbibundle}
		\begin{tikzcd}
		(\BB^n \times F)/\Gamma \arrow[rr, "\Phi"] \arrow[d,"\mathrm{pr}_1"'] &  & \zeta^{-1}(D) \arrow[d, "\zeta"] \\
		\BB^n/\Gamma \arrow[rr, "\phi"]                                &  & D                               
		\end{tikzcd}
		\end{equation}
	    commutes, where $\mathrm{pr}_1([x,f])=[x]$.
	   \end{itemize}  
	   
		Note that a fiber bundle in the sense of \cite[Chapter 8]{igl2} (when the base space is an orbifold) is the particular case of an orbibundle where $a(h,x)f$ always equals $f$. If $F=\BB^2$, then we say that $\zeta$ is a {\bf disc orbibundle}.
	
	\begin{rmk}
	Our definition was originally discovered by studying examples in complex hyperbolic geometry and is heavily inspired by Audin's approach to Seifert manifolds \cite{audin}. We later found out a similar definition in \cite[3.3 Orbibundles and Frobenius’ Theorem]{car} which does not use diffeology and assumes the total space to be an orbifold. Not requiring the total space to be an orbifold makes the description of a $G$-orbibundle (see Definition \ref{gorbibundle}) more natural. Moreover, the language of diffeological spaces simplifies the theory due to its good categorical properties (a highlight being the fact that the category has quotients).
	\end{rmk}
	
	If $F$ is a discrete (and countable) diffeological space, we say that $\zeta:M\to B$ is an {\bf orbifold covering map} and if $F$ is finite with $d$ elements we say the orbifold cover has degree $d$. Let us analyze more precisely what this object is. Following the above diagram, $\zeta^{-1}(D)$ is modeled by $(\BB^n \times F)/\Gamma$. 
    Note that $\Gamma$ acts directly on $F$ because $F$ is discrete. 
    Writing $F/\Gamma = \{\Gamma f_1,\ldots, \Gamma f_l\}$, the set of the disjoint orbits of $F$,  we have the diffeomorphism
	\begin{align*}
	\coprod_{i=1}^l \mathbb B^n/\mathrm{stab}_\Gamma (f_i) &\to (\mathbb B^n \times F)/\Gamma 
	\\{([x],f_i)}            & \mapsto {[x,f_i]}                 
	\end{align*}
	Therefore, $M$ is an orbifold.
	\begin{defi} An orbibundle $\zeta:M \to B$ with fiber $\RR^k$ is a {\bf real vector orbibundle of rank~$k$}~if:
	\begin{itemize}
		\item every fiber of $\zeta$ over a regular point is a real vector space,
		\item for each orbifold chart $\phi:\BB^n/\Gamma \to D$, following the notation in diagram \eqref{definition orbibundle}, the map $a(h,x)$ is a linear isomorphism; 
		\item for each regular point $u \in \BB^n/\Gamma$, the map $\Psi:\mathrm{pr}_1^{-1}(u) \to \zeta^{-1}(\phi(u))$ is an $\RR$-linear isomorphism.
	\end{itemize}
	\end{defi}
	Complex vector orbibundles are defined analogously. Furthermore, we say $M \to B$ is an {\bf oriented vector orbibundle} if removing the singular points of $B$ we obtain an oriented vector bundle.

	\begin{rmk} For each $u \in \BB^n$ the map $a(\cdot,u):\stab_{\Gamma}(u) \to \mathrm{GL}(k,\RR)$ is a group action and $\mathrm{pr}_1^{-1}(u) \simeq \RR^k/\stab_{\Gamma}(u)$. In particular, whenever $u$ is a regular point, $\mathrm{pr}_1^{-1}(u)$ is a linear space.\end{rmk}
	\begin{defi} \label{gorbibundle}
	Consider a diffeological group $G$. We say an orbibundle  $\zeta:M \to B$ with fiber $G$ is a {\bf$G$-orbibundle} if
	
	\begin{itemize}
		\item $G$ acts smoothly on $M$;
		\item for each orbifold chart $\phi:\BB^n/\Gamma \to D$, according to the notation in diagram  \eqref{definition orbibundle}, we consider $a: \Gamma \times \BB^n\to G$ and the action given by $h(x,g) := (hx,a(h,x)g)$;
		\item the map $\Phi: (\BB^n\times G)/\Gamma \to \zeta^{-1}(D)$ is $G$-equivariant, where $G$ acts on $(\BB^n\times G)/\Gamma$ on the right.
	\end{itemize}
	\end{defi}
	
	In this paper, $\SP^1$ stands for the group of unit complex numbers.
	
	\begin{prop} For\/ $\SP^1$-orbibundles, up to an $\SP^1$-equivariant diffeomorphism, we can assume that the map $a:\Gamma\times\BB^n\to \SP^1$ does not depend on $x$.
	\end{prop}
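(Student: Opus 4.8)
The plan is to argue chart by chart. Fix an orbifold chart $\phi:\BB^n/\Gamma\to D$ together with the bundle chart $\Phi:(\BB^n\times\SP^1)/\Gamma\to\zeta^{-1}(D)$ and the cocycle $a:\Gamma\times\BB^n\to\SP^1$ of diagram~\eqref{definition orbibundle}. The idea is to compose $\Phi$ with a gauge transformation of the model: for a smooth map $\lambda:\BB^n\to\SP^1$, set $\Psi(x,g):=(x,\lambda(x)g)$. Left multiplication by $\lambda(x)$ commutes with the right $\SP^1$-action, so $\Psi$ is an $\SP^1$-equivariant diffeomorphism of $\BB^n\times\SP^1$ over $\BB^n$, and a short computation with Definition~\ref{gorbibundle} shows that $\Psi$ intertwines the $\Gamma$-action $h(x,g)=(hx,a(h,x)g)$ with the action whose cocycle is $a'(h,x):=\lambda(hx)\,a(h,x)\,\lambda(x)^{-1}$. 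Hence $\Psi$ descends to an $\SP^1$-equivariant diffeomorphism between the two quotient models, and $\Phi\circ\Psi^{-1}$ is a new bundle chart over the same orbifold chart whose cocycle is $a'$. Everything then reduces to choosing $\lambda$ so that $a'$ does not depend on $x$.

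To find $\lambda$, I would first record the identity forced by associativity of the $\Gamma$-action on $\BB^n\times\SP^1$, namely $a(h_1h_2,x)=a(h_1,h_2x)\,a(h_2,x)$ for all $h_1,h_2\in\Gamma$ and $x\in\BB^n$ (here one uses that $\Gamma$ acts linearly, so $h_2x\in\BB^n$). Since $\BB^n$ is simply connected, each smooth map $a(h,\cdot):\BB^n\to\SP^1$ lifts through the universal covering $\RR\to\SP^1,\ \theta\mapsto e^{i\theta}$, to a smooth map $\tilde a(h,\cdot):\BB^n\to\RR$, unique after fixing $\tilde a(h,0)$. This is the delicate point: the cocycle identity only lifts up to a constant, i.e. $\tilde a(h_1h_2,x)=\tilde a(h_1,h_2x)+\tilde a(h_2,x)+2\pi\,c(h_1,h_2)$ with $c(h_1,h_2)\in\ZZ$ independent of $x$, since two smooth lifts of the same map on the connected set $\BB^n$ differ by a constant in $2\pi\ZZ$. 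Now use that $\Gamma$ is finite to average in $\RR$: put $\tilde\lambda(x):=\tfrac1{|\Gamma|}\sum_{g\in\Gamma}\tilde a(g,x)$ and $\lambda:=e^{i\tilde\lambda}$.

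The computation to carry out is then to insert the lifted identity (with $h_1=g$, $h_2=h$) into $\tilde\lambda(hx)=\tfrac1{|\Gamma|}\sum_{g}\tilde a(g,hx)$ and reindex the resulting sum by $g\mapsto gh$; this yields $\tilde\lambda(hx)-\tilde\lambda(x)=-\tilde a(h,x)-\tfrac{2\pi}{|\Gamma|}\sum_{g\in\Gamma}c(g,h)$, so that a lift of $a'$ is $\tilde\lambda(hx)+\tilde a(h,x)-\tilde\lambda(x)=-\tfrac{2\pi}{|\Gamma|}\sum_{g\in\Gamma}c(g,h)$, which does not depend on $x$. Therefore $a'(h,x)=\exp\!\big(-\tfrac{2\pi i}{|\Gamma|}\sum_{g\in\Gamma}c(g,h)\big)$ depends only on $h$, and one checks incidentally that it is then a homomorphism $\Gamma\to\SP^1$ (as it must be, being an $x$-independent cocycle). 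Replacing $\Phi$ by $\Phi\circ\Psi^{-1}$ over each chart of a cover of $B$ proves the proposition: the new models still fit the commuting square of~\eqref{definition orbibundle} and carry the right $\SP^1$-action, so they satisfy all the conditions of Definition~\ref{gorbibundle}, now with $a$ constant along $\BB^n$.

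The main obstacle is precisely the $\SP^1$-valued nature of the cocycle: one wants to kill the $x$-dependence by averaging, but averaging only makes sense in $\RR$, so one must first lift $a$ to $\RR$ — legitimate because $\BB^n$ is simply connected — and then keep track of the integral $2$-cochain $c$ recording the failure of the cocycle identity to lift exactly (this $c$ need not be trivial, as $H^2(\Gamma;\ZZ)$ may be nonzero, which is why one cannot simply lift to a homomorphism). Once that bookkeeping is in place, the remaining verifications — smoothness of $\lambda$ and $\Psi$, $\SP^1$-equivariance, and descent to the quotients — are routine in the diffeological category, all the spaces involved being in fact smooth manifolds.
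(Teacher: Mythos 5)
Your proof is correct, and it follows the same overall strategy as the paper --- both arguments produce a gauge transformation $\lambda$ of the local model $\BB^n\times\SP^1$ by averaging the cocycle over the finite group $\Gamma$, so that conjugating by $\lambda$ kills the $x$-dependence --- but the averaging is implemented by a genuinely different mechanism. The paper averages directly in $\CC$: it sets $\lambda(x)=\mathrm{norm}\big(\sum_{h\in\Gamma}a(h,x)a(h^{-1},0)\big)$ with $\mathrm{norm}(z)=z/|z|$, checks $\Gamma$-equivariance using the cocycle identity, and lands on the constant cocycle $a(h,0)$; the price is that the sum is only guaranteed to be nonzero near $x=0$ (where it equals $|\Gamma|$), so the chart must be shrunk to a smaller ball $D$. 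You instead lift $a(h,\cdot)$ to $\RR$ through the exponential covering, average the lifts, and keep track of the integer $2$-cochain $c$ measuring the failure of the lifted cocycle identity; your reindexing computation $\tilde\lambda(hx)+\tilde a(h,x)-\tilde\lambda(x)=-\tfrac{2\pi}{|\Gamma|}\sum_{g}c(g,h)$ is right, and it buys you a gauge transformation defined on all of $\BB^n$ with no shrinking, at the cost of the lifting bookkeeping (which is legitimate exactly as you say, since $\BB^n$ is simply connected and $h_2x\in\BB^n$ for $h_2\in\Gamma\subset\OO(n)$). Both proofs exploit that $\SP^1$ is abelian and $\Gamma$ is finite; neither generalizes verbatim to nonabelian structure groups. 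The only cosmetic difference in the outcome is that the paper's constant cocycle is $h\mapsto a(h,0)$ while yours is $h\mapsto\exp\big(-\tfrac{2\pi i}{|\Gamma|}\sum_{g}c(g,h)\big)$, but both are homomorphisms $\Gamma\to\SP^1$ and the proposition does not distinguish them.
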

	\begin{proof} 
	Given a small ball $U \subset \BB^n$ centered at $0$, define $f:U \times \SP^1 \to U \times \SP^1$,
	$$f(x,s) :=\Big(x, \mathrm{norm}\big(\sum_{h\in \Gamma}  a(h^{-1},0) a(h,x)\big)s\Big),$$
	where $\mathrm{norm}(z) := z/|z|$, $z \in \CC$. Define actions of $\Gamma$ on the domain by $h(x,s) := (hx,a(h,x)s)$ and on the codomain by $h(x,s) := (hx,a(h,0)s)$. Since $a(hg,x)=a(h,gx)a(g,x)$,
	we have
	\begin{align*}f(g(x,s)) &= \Big(gx, \mathrm{norm}\big(\sum_{h \in \Gamma}  a(h^{-1},0) a(h,gx) \big) a(g,x) s\Big)
	\\&=\Big(gx, \mathrm{norm}\big(\sum_{h \in \Gamma} a(h^{-1},0)a(hg,x)\big)s\Big)
	\\&=\Big(gx, \mathrm{norm}\big(\sum_{h \in \Gamma} a(gh^{-1},0) a(h,x) \big)s\Big)
	\\&=\Big(gx,a(g,0) \mathrm{norm}\big(\sum_{h \in \Gamma}  a(h^{-1},0)a(h,x)\big)s\Big)
	\\&=gf(x,s).
	\end{align*}
	Hence the map $f$ is a $\Gamma$-equivariant diffeomorphism and 
	$$f:(U \times \SP^1)/\Gamma \to (U \times \SP^1)/\Gamma$$
	is a diffeomorphism. Since the action of $\SP^1$ on both quotients is on the right, we conclude it is an $\SP^1$-equivariant diffeomorphism.
	\end{proof}
	
	\begin{rmk} The above argument can be easily adapted to the case of ${\rm GL}_n$ and ${\rm SL}_n$-orbibundles.
	
	\end{rmk}
	
	\subsection{Euler number of $\SP^1$-orbibundles over $2$-orbifolds}  \label{subsection eulernumber}
	In this subsection, we establish some tools to calculate Euler numbers and to characterize $\SP^1$-orbibundles by such invariant.

	Consider a $2$-orbifold $B$, a point $x\in B$, and an orbifold chart $\phi:\BB^2/\Gamma \to B$ centered at~$x$. Note that $\Gamma$ is a finite subgroup of $\SO(2)$ and, therefore, our orbifolds have only isolated singularities since $\Gamma$ acts freely in $\BB^2 \setminus \{0\}$.
	
	Thinking of $\BB^2$ as the unit disc in $\CC$ centered at $0$, we have $\Gamma=\langle \xi \rangle$, where $\xi = \exp(2\pi i/n)$. The number $n$ is the {\bf order} of the point $x$. If $n=1$, then $\Gamma={1}$ and the point is regular. In the case $n\geq 2$, the quotient $\BB^2/\Gamma$ is a cone with cone point of angle $2\pi/n$.
	
	The orbifolds we are interested in are compact and, therefore, have a finite number of singular points. In this section, when we consider a $2$-orbifold $B$, we assume that it is connected, compact, and oriented. Moreover, $x_1,\ldots, x_n$ denote its singular points.
	
	\begin{defi}[Euler number of $\SP^1$-orbibundles]\label{definition: euler number}
	Let $\zeta:M\to B$ be an 
    oriented
    $\SP^1$-orbibundle and let $x_0\in B$ be a regular point. For each $x_i$, $i=0,1,\ldots,n$, consider a small smooth disc $D_i\subset B$ centered at $x_i$. We have the surface with boundary $B' = B \setminus \sqcup_k D_k$. Note that $\zeta:M' \to B'$, where $M':=\zeta^{-1}(B')$, is an ordinary $\SP^1$-bundle and $B'$ is homotopically equivalent to a graph. Therefore, $\zeta:M' \to B'$ admits a global section $\sigma$ because $\SP^1$-bundles over graphs are trivial. 
    Fix an arbitrary fiber $s$ of $\zeta:M \to B$ over a regular point
    (every time we talk about an $\SP^1$-fiber we will assume that it is parameterized and that the curve goes around the fiber just once in the same direction of the $\SP^1$-orbits). 
    We define the {\bf Euler number} $e(M)$ by the formula 
	$$
	\sigma|_{\partial B'} = -e(M) s 
	$$
	in $H_1(M,\QQ)$.
	\end{defi}
	
	Let us prove that $e(M)$ does not depend on the choice of $\sigma$. Given sections $\sigma_1$ and $\sigma_2$ of $\zeta:M' \to B'$ we define $f:B' \to \SP^1$ such that $\sigma_1(x)=f(x)\sigma_2(x)$, $x\in B'$.
	Note that in homology $\sigma_1|_{\partial D_k} = \deg(f|_{\partial D_k})s+\sigma_2|_{\partial D_k}$ and, therefore,
	$$\sum_k \sigma_1|_{\partial D_k} - \sum_k \sigma_2|_{\partial D_k} = \sum_k \deg(f|_{\partial D_k})s.$$
	On the other hand, if $d\theta:=-idz/z$ is the angle $1$-form of $\SP^1$, then, using Stokes theorem and the fact that the orientation of $\partial D_k$ is opposite to that of $\partial B'$, we obtain
	$$\sum_k \deg(f|_{\partial D_k}) =\sum_k \frac{1}{2\pi}\int_{\partial D_k} f^\ast d\theta = - \frac{1}{2\pi}\int_{ B'} d(f^\ast d\theta) = 0 ,$$
	$$
	\sigma_1|_{\partial B'} = -\sum_k\sigma_1|_{\partial D_k} = -\sum_k \sigma_2|_{\partial D_k} = \sigma_2|_{\partial B'}.
	$$
	Hence, the Euler number is well-defined. It is easy to verify that the definition of the Euler number does not depend on the choice of the regular point $x_0$.

	Next, we define the Euler number of oriented rank $2$ real vector orbibundles. Recall that by oriented here we mean that, removing the singular points of the base orbifold, one obtains an oriented vector bundle over a surface.
	
	As a motivation, consider an oriented real vector bundle $\zeta:L \to B$ of rank $2$ over a connected, compact, and oriented surface. If $X$ is a section transversal to the $0$ section in $L$, then these sections intersect in a finite number of points whose projections onto $B$ are denoted by $x_1',\ldots,x_n'$. On $L$ we consider local trivializations $\xi_x:L|_{\overline D_x}\to \overline D_x \times \RR^2$ such that $D_x$ is a small open disc centered in $x$ with smooth boundary and this atlas of $L$ is compatible with the orientation of the bundle. Consider as well the surface with boundary $B':=B\setminus \sqcup_i D_{x_i}$. 
	
	By the Poincaré-Hopf theorem, the Euler number of the vector bundle $\zeta$ is given by the formula
	\begin{equation}\label{eq:poincare hopf}
	e(M)=\sum_{k=1}^n \textrm{index}(X,x_k').
	\end{equation}
	
	\begin{rmk} In a finite-dimensional real vector space $V$ consider the sphere $\SP(V) := V^\times/\RR_{>0}$, where $V^\times := V\setminus 0$. The topology and smooth structure of $\SP(V)$ can be induced from a sphere obtained from an inner product in $V$. Furthermore, these two structures do not depend on the choice of the inner product. Analogously, given a real vector orbibundle $L \to B$ we can define the natural sphere orbibundle $\SP(L) \to B$. In the case of 
    an oriented
    real vector orbibundle of rank $2$, it is always possible to introduce an $\SP^1$-action compatible with the natural orientation of the fibers (just consider a metric on the vector orbibundle); the Euler number will not depend on the choice of the action.
	\end{rmk} 
	
	The term $\textrm{index}(X,x_k')$ in equation \eqref{eq:poincare hopf} is the degree of the map $\partial D_{x_k'} \to \SP(\RR^2)$ between circles given by $x \mapsto \mathrm{pr}_2\, \xi_{x_k'}\big(X(x)\big)$, where $\mathrm{pr}_2:D_{x_k'} \times ({\RR^2})^\times  \to \SP(\RR^2)$ is the projection on the second coordinate composed with the quotient map $(\RR^2)^\times \to \SP(\RR^2)$. 

	Consider the section $\sigma(x):=[X(x)]$  of $\SP(L)|_{B'} \to B'$ and an oriented fiber $s$ of $\SP(L) \to B$. Observe that $\sigma|_{\partial D_{x_k}} \!\!= \textrm{index}(X,x_k') s$ in homology. Indeed, the oriented fiber over $x_k'$ generates the fundamental group of the solid torus $\zeta^{-1}(\overline{D_{x_k'}})$ and two oriented fibers are homotopic). Hence
	$$\sigma|_{\partial B'} = -e\big(\SP(L)\big)s \text{ in }H_1(\SP(L),\ZZ)$$
	because $\sigma|_{\partial B'} = - \sum_k \sigma|_{\partial D_k}$ since the orientation of $\partial D_k$ is opposite to that of $\partial B'$.
	So, we can reduce our calculations to ordinary $\SP^1$-bundles.
	
	\begin{defi}
	Consider an oriented real vector orbibundle  $\zeta:L \to B$ of rank $2$, where $B$ is a connected, compact, and oriented  $2$-orbifold. We define the Euler number of $L$ to be the Euler number of $\SP(L) \to B$.
	\end{defi}
	
	An example of oriented $2$-dimensional real vector orbibundle is the tangent bundle $TB$ of $B$. For each $x\in B$ we have the stalk $C_x^\infty$ of the sheaf $C^\infty(-,\RR)$ and the space $\mathrm{Der_x}$ of derivations at~$x$, which is a real vector space. A tangent vector at a point $x\in B$ is a derivation $v$ that comes from a smooth curve  $\gamma:\RR \to M$ such that $\gamma(0)=x$, i.e., for every $f\in C_x^\infty$ we have $v(f):=(f\circ \gamma)'(0)$. The real vector space $\mathrm{T}_xB$ is defined as $\mathrm{Span}_\RR\{v \in \mathrm{Der_x}:v \text{ is a tangent vector at }x\}$. It is easy to verify that $\mathrm{T}B$ is a rank $2$ real vector orbibundle.
	
	\begin{lemma} \label{circle section lemma} Let\/ $\Gamma_1$ and\/ $\Gamma_2$ be finite subgroups of\/ $\SP^1$ such that\/ $\Gamma_2$ is a subgroup of\/ $\Gamma_1$. 
    Also consider the action\/ $\Gamma_1\times \SP^1 \to \SP^1$ given by $(\xi, \gamma) \mapsto \xi^k \gamma$, with $k \in \ZZ$, and the solid torus\/ $\TT =\overline{\BB}^2 \times \SP^1$.
    We have the following commutative diagram
		$$
		\begin{tikzcd}
		\TT/\Gamma_2 \arrow[r, "G"] \arrow[d, "\zeta_2"'] & \TT/\Gamma_1 \arrow[d, "\zeta_1"] \\
		\overline{\BB}^2/\Gamma_2 \arrow[r, "g"]                                    & \overline{\BB}^2/\Gamma_1                                  
		\end{tikzcd}    
		$$
		of natural continuous maps.
		
		With respect to the map $\zeta_i:\TT/\Gamma_i \to  \overline{\BB}^2/\Gamma_i$, consider the fiber $s_i:\SP^1 \to \zeta_i^{-1}([1])$ over $[1] \in \overline{\BB}^2/\Gamma_i$ given by $s_i(z) := [1,z]$, and the fiber $s_i':\SP^1/\Gamma_i \to \zeta_i^{-1}([0])$ over $[0]$ given by $s_i'([z]) := [0,z]$.
		
		Additionally, let $\sigma_i$ be a continuous section of the \/ $\SP^1$-principal bundle $\zeta_i:(\SP^1 \times \SP^1)/\Gamma_i \to \SP^1/\Gamma_i$, with $i=1,2$, such that $\sigma_2$ is on top of $\sigma_1$, i.e,
		$G\sigma_2 = \sigma_1g$. Then we have:
		\begin{itemize}
			\item $\TT/\Gamma_i$ is homeomorphic to $\TT$ and any fiber of $\zeta_i$ is a generator of $H_1\big(\TT/\Gamma_i,\QQ\big)$,    
			\item 
            $s_i=\frac{n_i}{d_i} s_i'$ in $H_1\big(\TT/\Gamma_i,\QQ\big)$,  where $n_i=|\Gamma_i|$ and $d_i=\gcd(k,n_i)$.
			\item  
            $[\Gamma_1:\Gamma_2]\sigma_1 = qs_1$ in $H_1\big(\TT/\Gamma_1,\QQ\big)$ if, and only if, $\sigma_2 = qs_2 $ in $H_1\big(\TT/\Gamma_2,\QQ\big)$.
		\end{itemize}
	\end{lemma}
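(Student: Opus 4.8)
The plan is to identify each $\TT/\Gamma_i$ as an ordinary solid torus and then establish all three bullets from naturality of $H_1(-,\QQ)$ together with the degree calculus for maps between circles; using $\QQ$-coefficients is precisely what lets us divide out the multiplicities $|\Gamma_i|$ and $[\Gamma_1:\Gamma_2]$ at the end. For the first bullet I would ignore $\zeta_i$ and use instead the other projection $\TT=\overline{\BB}^2\times\SP^1\to\SP^1$, $(x,z)\mapsto z$. The finite group $\Gamma_i$ permutes the $\overline{\BB}^2$-fibres of this projection, acting on the base circle through the given action on $\SP^1$, and this action on $\TT$ is free (the restriction of the given action to $\Gamma_i$ is effective, as is implicit in the very definition of $s_i'$); hence $\TT/\Gamma_i$ is a fibre bundle with fibre $\overline{\BB}^2$ over the circle $\SP^1/\Gamma_i$. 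An orientable disc bundle over $\SP^1$ is trivial, so $\TT/\Gamma_i\cong\overline{\BB}^2\times\SP^1=\TT$ and $H_1(\TT/\Gamma_i,\QQ)\cong\QQ$. In a trivialisation of this disc bundle the centre of the fibre over $[z]$ is the point $[0,z]$, so the image of $s_i'$ is precisely a core circle of the solid torus, and therefore $s_i'$ generates $H_1(\TT/\Gamma_i,\ZZ)$. Since $(\overline{\BB}^2/\Gamma_i)\setminus\{[0]\}$ is connected, every fibre of $\zeta_i$ over a noncentral point is homologous to the image of $s_i$, so once the second bullet shows $s_i\neq 0$, every fibre of $\zeta_i$ is a $\QQ$-generator.

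For the second bullet I would pass to the quotient map $p_i\colon\TT\to\TT/\Gamma_i$ and let $\mu\in H_1(\TT,\ZZ)$ denote the class of a fibre $\{x\}\times\SP^1$; all of these fibres, central or not, represent $\mu$. For $x\neq0$ the $\Gamma_i$-orbit of $(x,z)$ meets $\{x\}\times\SP^1$ only at $(x,z)$, so $p_i$ restricts to a homeomorphism of $\{x\}\times\SP^1$ onto $\zeta_i^{-1}([x])$, whence $p_{i*}(\mu)=s_i$. On the central circle $\{0\}\times\SP^1$, however, $p_i$ is the quotient by the $\Gamma_i$-action on that circle, an $|\Gamma_i|$-sheeted cyclic covering of circles whose base is parametrised by $s_i'$, whence $p_{i*}(\mu)=|\Gamma_i|\,s_i'$. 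Comparing the two values of $p_{i*}(\mu)$ yields $s_i=|\Gamma_i|\,s_i'$ in $H_1(\TT/\Gamma_i,\QQ)$.

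For the third bullet I would apply $H_1(-,\QQ)$ to the top arrow of the square to obtain a homomorphism $G_*\colon\QQ\to\QQ$, and set $d:=[\Gamma_1:\Gamma_2]$. Restricted to the boundary circle, $g$ is a degree-$d$ covering $\SP^1/\Gamma_2\to\SP^1/\Gamma_1$, and one checks directly that $G\circ s_2'=s_1'\circ g$ as well. Pushing forward the generator of $H_1(\SP^1/\Gamma_2,\ZZ)$ then gives $G_*(s_2')=d\,s_1'$, and, using the hypothesis $G\sigma_2=\sigma_1 g$, also $G_*(\sigma_2)=d\,\sigma_1$. In particular, with respect to the generators $s_i'$ the map $G_*$ is multiplication by $d$, hence an isomorphism over $\QQ$; therefore $\sigma_2=q\,s_2'$ if and only if $G_*(\sigma_2)=q\,G_*(s_2')$, i.e.\ $d\,\sigma_1=qd\,s_1'$, i.e.\ $\sigma_1=q\,s_1'$, which is exactly the claimed equivalence.

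I expect the real work to lie in the first bullet: checking that $\TT/\Gamma_i$ is genuinely a solid torus, identifying its core circle with the central fibre, and correctly tracking the multiplicity $|\Gamma_i|$ with which $s_i$ wraps that core. Once those topological facts and the relevant degrees are pinned down, the last two bullets are pure functoriality of first homology, carried out over $\QQ$ precisely so that the index factors $|\Gamma_i|$ and $d$ can be cancelled.
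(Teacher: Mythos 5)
Your overall strategy---identify $\TT/\Gamma_i$ with a solid torus whose core is parametrized by $s_i'$, then push fibre classes around by functoriality of $H_1(-,\QQ)$ and count covering degrees---is the same as the paper's, and your arguments for the second and third bullets essentially reproduce the paper's (it uses the homotopy $R(t,z)=[t,z]$ where you compare $p_{i*}$ on a boundary fibre and on the core, and its third-bullet computation is exactly your ``$G_*$ is multiplication by $d=[\Gamma_1:\Gamma_2]$, cancel over $\QQ$''). The genuine divergence, and the one real gap, is in the first bullet.

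The lemma assumes only a \emph{continuous} action $\Gamma_1\times\SP^1\to\SP^1$; effectiveness is not a hypothesis, and the paper's proof explicitly covers the non-effective cases: it reduces to the rotation action $\gamma\mapsto\xi^k\gamma$, treats $k=0$ separately, and allows $d=\gcd(n,k)>1$. Your justification that ``the restriction of the action to $\Gamma_i$ is effective, as is implicit in the very definition of $s_i'$'' is not correct: $s_i'$ is defined on $\SP^1/\Gamma_i$ for an arbitrary action (if the action is trivial, $\SP^1/\Gamma_i=\SP^1$ and $s_i'$ is simply the central fibre). When the fibre action is not free, the $\Gamma_i$-action on $\TT$ fixes points of the core circle, so your step ``free action, hence a $\overline{\BB}^2$-bundle over $\SP^1/\Gamma_i$'' fails; the fibres of $\TT/\Gamma_i\to\SP^1/\Gamma_i$ are then cones $\overline{\BB}^2/(\ZZ/m)$ rather than honest $\overline{\BB}^2$-fibres. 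The conclusion of the first bullet survives (a cone is still homeomorphic to a disc, so local triviality can be rescued), but your argument as written does not establish it. The paper avoids the issue entirely with the explicit $\Gamma_i$-invariant map $\lambda(z,\gamma)=(\gamma^{-l}z^d,\gamma^{n/d})$, which descends to a homeomorphism $\TT/\Gamma_i\to\TT$ for every $k$, including $k=0$.

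Be aware that the same effectiveness issue propagates into your degree counts in the other two bullets: ``$p_i$ restricted to the core is an $|\Gamma_i|$-sheeted covering'' and ``$G\circ s_2'=s_1'\circ g$ has degree $d$ on the central circles'' are correct exactly when $\Gamma_i$ acts effectively on the fibre $\SP^1$; in general the relevant degree is the order of the image of $\Gamma_i$ in $\mathrm{Homeo}(\SP^1)$. The paper's proof of the second and third items makes the same tacit assumption, so you match it there; but since your first bullet is where effectiveness is invoked as if it were a hypothesis, you should either state that standing assumption explicitly and prove the lemma under it, or rework the first bullet (and then re-examine the constants in the second) along the lines of the paper's explicit homeomorphism.
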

	\begin{proof} 
	Let us prove the first item. 
    The action of $\Gamma_i$ on $\SP^1$ is $\Gamma_i \times \SP^1 \to \SP^1$, $(\xi,\gamma) \mapsto \xi^k\gamma$, where $\xi := \exp(2\pi i/n)$ and $n:=|\Gamma_i|$.
    The result is obvious for $k=0$. We assume $1 \leq k\leq n-1$. If $d:=\gcd(n,k)$, then there is an integer $l$ such that $kl =d \mod n$ and $1 \leq l\leq n-1$.  The continuous map $\lambda:\TT \to \TT$ given by $\lambda (z,\gamma) = (\gamma^{-l}z^d,\gamma^{n/d})$ is $\Gamma_i$-invariant and surjective. Besides, if $\lambda(z',\gamma') = \lambda(z,\gamma)$, then there exists $\omega \in \Gamma_i$ satisfying $(\omega z,\omega^k\gamma)=(z',\gamma')$. Indeed, there exists an integer $s$ such that $\gamma'=\xi^{sd} \gamma$, because $\gamma'^{n/d}=\gamma^{n/d}$ and, consequently, $z'^d = \xi^{lsd} z^d$. By the same reasoning, there is an integer $t$ satisfying $z' = \xi^{ls+(n/d) t} z$. Taking $\omega := \xi^{ls+(n/d) t}$ we have $\omega^k =\xi^{sd}$ and $(z',\gamma')=(\omega z,\omega^k\gamma)$.
		Therefore, we have the continuous bijection $\TT/\Gamma_i \to \TT$, $[z,\gamma] \mapsto \lambda(z,\gamma)$, which is a homeomorphism because $\TT/\Gamma_i$ is compact.
		
        The second item follows from the homotopy $R:[0,1] \times \SP^1 \to \TT/\Gamma_i$, $R(t,z):=[t,z]$, because $R(0,-) = \frac{n_i}{d_i} s_i'$ and $R(1,-) = s_i$ in $H_1\big(\TT/\Gamma_i,\QQ\big)$.

		Let us prove the third item. 
        At the homology level, $\sigma_1 \circ g = [\Gamma_1:\Gamma_2] \sigma_1$, since $g:\SP^1/\Gamma_2 \to \SP^1/\Gamma_1$ is a covering map of degree $[\Gamma_1:\Gamma_2]$. Additionally, $G_\ast (s_2) = s_1$ at the homology level. Therefore, if
		$\sigma_2 = q s_2$ then 
		$$G_\ast(\sigma_2) = q \,G_\ast(s_2),$$
		$$[\Gamma_1:\Gamma_2]\sigma_1 = qs_1$$ 
		in homology, due to $G\sigma_2 = \sigma_1g$.
		\end{proof}
	
	The Euler number of $\TT B$ is the {\bf Euler characteristic} of $B$, denoted by $\chi(B)$. 
	
	We prove the next theorem, which is a standard result in the theory of orbifolds, in order to illustrate a practical application of Lemma \ref{circle section lemma}. This may also be useful as the proofs of Theorems~\ref{thm classification circle bundles} and \ref{euler thm} follow similar lines of thought.
	
	\begin{thm}\label{eulerchar}The Euler characteristic of $B$ is given by
		$$
		\chi(B) = \chi(\tilde B) + \sum_{k=1}^n \bigg( -1 + \frac1{m_k}\bigg),
		$$
		where the numbers $m_1,\ldots,m_n$ are the orders of the singular points $x_1,\ldots,x_n$ of $B$ and $\tilde{B}$ stands for the smooth surface obtained by removing the singular discs and gluing regular ones.
	\end{thm}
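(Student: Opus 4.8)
The plan is to read $\chi(B)$ as the Euler number of the circle orbibundle $\SP(\TT B)\to B$ (which is how it is defined here) and to compare it against $\chi(\tilde B)=e\bigl(\SP(\TT\tilde B)\bigr)$ --- the latter being the classical Poincar\'e--Hopf computation on the surface $\tilde B$ --- by means of a single section defined over the common regular part. Concretely, I would pick a regular point $x_0$ and small smooth discs $D_0,\dots,D_n$ about $x_0,x_1,\dots,x_n$, set $B':=B\setminus\bigsqcup_kD_k$, $C_k:=\partial D_k$, and take a section $\sigma$ of $\SP(\TT B)$ over $B'$ as in Definition \ref{definition: euler number}, so that $\sigma|_{\partial B'}=-\chi(B)\,s$ in $H_1\bigl(\SP(\TT B),\QQ\bigr)$. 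Since $\tilde B$ is $B'$ with smooth discs $\tilde D_k$ reglued along the $C_k$ (with $\tilde D_0:=D_0$), the region $B'$ is common to both and $\SP(\TT\tilde B)|_{B'}=\SP(\TT B)|_{B'}$ canonically; the same $\sigma$ therefore also computes $\chi(\tilde B)$ (using all of $\tilde D_0,\dots,\tilde D_n$, which is allowed as extra regular discs do not change the Euler number), giving $\sigma|_{\partial B'}=-\chi(\tilde B)\,\tilde s$ in $H_1\bigl(\SP(\TT\tilde B),\QQ\bigr)$. As $\sigma|_{\partial B'}=-\sum_{k=0}^n\sigma|_{C_k}$, it then remains to compare, for each $k$, the class of the loop $\sigma|_{C_k}$ in the two total spaces.

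For $k=0$, and for any regular disc, the two pushforwards literally agree. For a singular disc $D_k$ of order $m:=m_k$, the local model identifies $\SP(\TT B)|_{D_k}$ with $\TT/\Gamma_k$, where $\Gamma_k=\langle\exp(2\pi i/m)\rangle$ acts on $\TT=\overline{\BB}^2\times\SP^1$ by rotation on the base and by its differential --- again rotation --- on the fibre; this is the ``$k=1$'' instance of Lemma \ref{circle section lemma}. Meanwhile $\SP(\TT\tilde B)|_{\tilde D_k}$ is the trivial circle bundle over the smooth disc $\tilde D_k$. These are two solid-torus fillings of the common boundary torus $E_k:=\SP(\TT B)|_{C_k}$, and the class of $\sigma|_{C_k}$ in either total space is the image of its class in $H_1(E_k,\ZZ)\cong\ZZ^2$ after killing the meridian of the respective filling. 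Lemma \ref{circle section lemma}(1)--(2) identifies $\TT/\Gamma_k$ as a solid torus whose $H_1(-,\QQ)$ is generated by any fibre and in which the fibre $s_k'$ over the cone point equals $\tfrac1m$ of the regular fibre $s$, while item (3) (with $\Gamma_2=\{1\}$) lets us read the class of $\sigma|_{C_k}$ in $H_1(\TT/\Gamma_k,\QQ)$ off the winding number of the lift of $\sigma|_{C_k}$ along the boundary of the $m$-fold branched cover $\overline{\BB}^2\to D_k$.

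The heart of the matter is to locate the two meridians inside $H_1(E_k)$. Taking the basis $\{\mu_k^{\mathrm{cone}},\rho_k\}$ with $\mu_k^{\mathrm{cone}}$ the cone-side meridian and $\rho_k$ the class of the radial section, one computes that the projection $E_k\to C_k$ sends $p\,\mu_k^{\mathrm{cone}}+q\,\rho_k$ to $(mp+q)[C_k]$, and that the constant tangent frame of $\tilde D_k$ --- which over $C_k$ differs from the radial frame by a $(1-m)$-fold rotation, since in the branched coordinate $u=w^m$ the conversion factor between $\partial/\partial u$ and $\partial/\partial w$ is $1/(m\,w^{m-1})$, of winding $1-m$ --- has class $\mu_k^{\mathrm{reg}}=\mu_k^{\mathrm{cone}}+(1-m)\rho_k$. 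Writing $[\sigma|_{C_k}]=p_k\mu_k^{\mathrm{cone}}+q_k\rho_k$, the section condition forces $mp_k+q_k=1$; killing $\mu_k^{\mathrm{cone}}$ and using item (2) gives $[\sigma|_{C_k}]=\tfrac{q_k}{m}\,s$ in $\SP(\TT B)$, whereas killing $\mu_k^{\mathrm{reg}}$ gives $[\sigma|_{C_k}]=\bigl(q_k+(m-1)p_k\bigr)\tilde s$ in $\SP(\TT\tilde B)$. The difference of the two coefficients is $\tfrac{q_k}{m}-q_k-(m-1)p_k$, which with $q_k=1-mp_k$ collapses to $\tfrac1m-1$, independently of $\sigma$. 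Summing over $k=1,\dots,n$ and adding the equal regular contributions yields $\chi(B)-\chi(\tilde B)=\sum_{k=1}^n\bigl(-1+\tfrac1{m_k}\bigr)$, which is the claim.

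The genuine obstacle will be precisely the local computation at a cone point: pinning down the twist $\mu_k^{\mathrm{reg}}=\mu_k^{\mathrm{cone}}+(1-m)\rho_k$ between the cone-side and smooth-side solid-torus fillings, and keeping orientations and the normalization of the fibre class consistent with Definition \ref{definition: euler number}. The subtle point is that the relevant twist is $m-1$ and not $|\Gamma_k|=m$ --- it comes from the winding number of the derivative $w\mapsto m\,w^{m-1}$ of the branching map, not from the order of $\Gamma_k$ itself. Once this is in place, the reductions of the first two paragraphs, the invocations of Lemma \ref{circle section lemma}, and the closing arithmetic are all routine.
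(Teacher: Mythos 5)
Your proposal is correct, and its global skeleton is the same as the paper's: remove discs around $x_0,x_1,\dots,x_n$, observe that $\SP(\TT B)$ and $\SP(\TT\tilde B)$ agree over the common piece $B'$, and reduce everything to a local comparison at each cone point via Lemma \ref{circle section lemma}. Where you diverge is in how that local step is executed. The paper first \emph{normalizes} the section: using \cite[Lemma I.3.6]{audin} it arranges $\sigma$ to restrict to the outward radial field on each $\partial D_k$, whose lift to the branched cover is visibly homologous to one fibre, so Lemma \ref{circle section lemma} immediately gives $\sigma|_{\partial D_k}=s_k'=\frac1{m_k}s$ on the cone side and $1\cdot s$ on the smooth side; the two computations then share the same coefficient $f$ at $x_0$ and subtract. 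You instead keep $\sigma$ arbitrary and do a Dehn-filling bookkeeping in $H_1$ of the boundary torus $E_k$, locating the two meridians and checking that the discrepancy between the two readings is $\frac1{m_k}-1$ independently of $(p_k,q_k)$. Your arithmetic is consistent (with your sign convention $\mu_k^{\mathrm{cone}}=m\rho_k-s$ one indeed gets $\mu_k^{\mathrm{reg}}=\rho_k-s$, the cone reading $\frac{q_k}{m}s$, the regular reading $(1-p_k)\tilde s$, and difference $\frac1m-1$), and you correctly identify the twist computation as the delicate point. What each approach buys: the paper's normalization makes the local contribution a one-line consequence of Lemma \ref{circle section lemma} at the cost of invoking an extension lemma; your version avoids that lemma and makes transparent \emph{why} the answer is section-independent --- a fact you could also get in one line by noting that any two sections over $C_k$ differ in $H_1(E_k)$ by a multiple of the fibre class, which pushes forward identically into both fillings, so it suffices to compute the discrepancy for the radial section. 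Either way the proof is sound.
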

	\begin{proof}
		Consider a regular point $x_0$ and remove from $B$ small open discs $D_k$ centered at $x_k$, where $k=0,\dots,n$, thus obtaining a surface with boundary $B'$. For each $1 \leq k \leq n$ consider a vector field $V_k$ on $\partial D_k$ pointing outwards $B'$. By  \cite[Lemma I.3.6]{audin}, there is a global section $\sigma$ of the $\SP^1$-bundle $\SP(\TT B')$ such that $\sigma|_{\partial D_k}(x) = [V_k(x)] \in \SP(\TT_x B)$ for $k=1,\ldots,n$. Let $s$ be a generic fiber of $\zeta:\SP(\TT B) \to B$ over a regular point, and let $s_k$ be the fiber over $x_k$. By Lemma \ref{circle section lemma} we have $s=m_k s_k$ in $H_1(M,\QQ)$.
		
		We can assume that for each $k$ there is a chart $\phi_k:2\BB^2/\Gamma_k \to 2D_k$ centered at $x_k$ satisfying $\phi_k\big(\overline{\BB}^2/\Gamma_k\big)=\overline{D}_k$, where $\Gamma_k=\langle \exp(2\pi i/m_k) \rangle$, $2\BB^2$ is the disc of radius $2$ in the complex plane, and $2D_k$ is an open subset of $B$ containing $D_k$. In particular, we have the diffeomorphism $\phi_k:\SP^1/\Gamma_k \to \partial D_k.$ Furthermore, we have a trivialization of the vector bundle $\TT B$ given by the chart $\phi_k$ as described in the commutative diagram
		$$
		\begin{tikzcd}
		(2\BB^2\times \RR^2)/\Gamma_k \arrow[r, "\Phi_k"] \arrow[d] & TB|_{2D_k} \arrow[d, "\zeta"] \\
		2\BB^2/\Gamma_k \arrow[r, "\phi_k"]                       & 2D_k                         
		\end{tikzcd}
		$$
		In particular, we have the commutative diagram
		$$
		\begin{tikzcd}
		\overline{\BB}^2\times \SP^1 \arrow[r] \arrow[d] & (\overline{\BB}^2\times \SP^1)/\Gamma_k \arrow[r, "\Phi_k"] \arrow[d] & \SP(\TT B)|_{\overline{D}_k} \arrow[d, "\zeta"] \\
		\overline{\BB}^2 \arrow[r]                       & \overline{\BB}^2/\Gamma_k \arrow[r, "\phi_k"]                         & \overline{D}_k                              
		\end{tikzcd}
		$$
		
		Now the section $\sigma|_{\partial D_k}: \partial D_k \to \SP(\TT B)|_{\partial D_k}$ can be lifted to a section $\tilde \sigma_k:\SP^1 \to \SP^1 \times \SP^1$ satisfying $\Phi_k([\tilde \sigma_k(z)])= \sigma|_{\partial D_k} \circ \phi_k([z])$, as in Figure \ref{liftingvectorfield}. Since the $\SP^1$-bundle $\overline{\BB}^2\times \SP^1 \to \overline{\BB}^2$ is trivial, $\tilde \sigma_k$ equals the fiber $0\times \SP^1$ in $H_1(\overline{\BB}^2\times \SP^1,\QQ)$. Then, by Lemma \ref{circle section lemma}, we have
		$$
		\sigma|_{\partial D_k} = s_k'
		$$
		in $H_1\big(\SP(TB)|_{\overline{B}_k},\QQ\big)$.
		\begin{figure}[H]
			\centering
			\begin{minipage}{.5\textwidth}
				\centering
				\includegraphics[scale = .3]{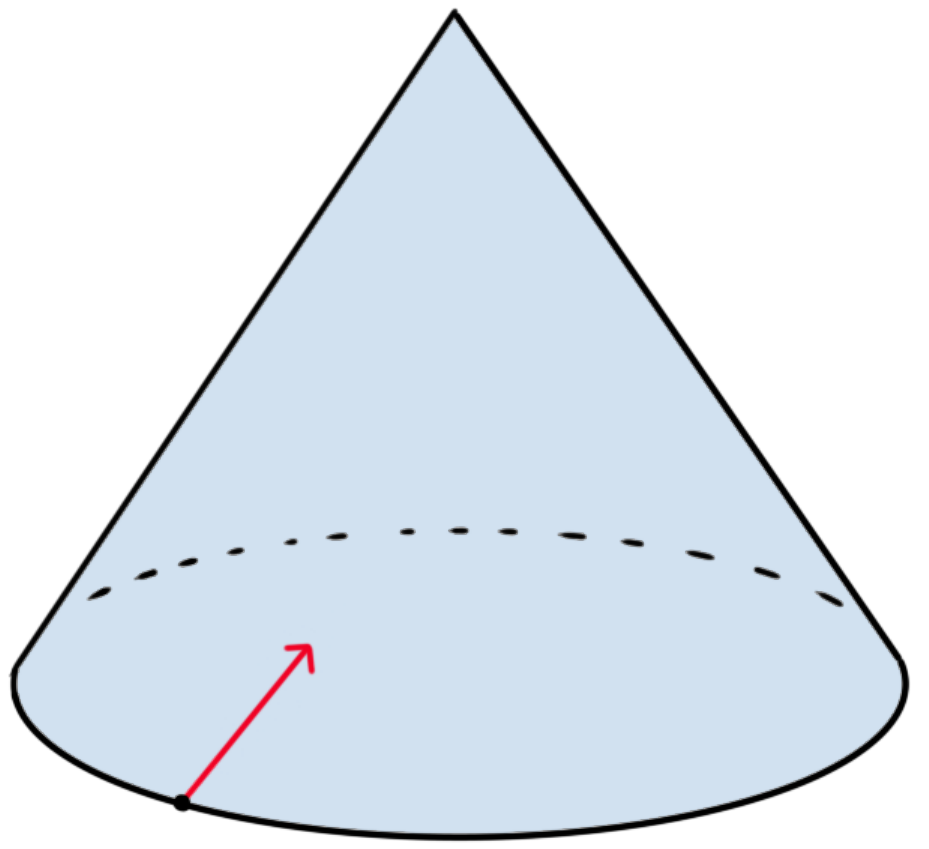}
				\caption*{(a)}
			\end{minipage}%
			\begin{minipage}{.5\textwidth}
				\centering
				\includegraphics[scale = .3]{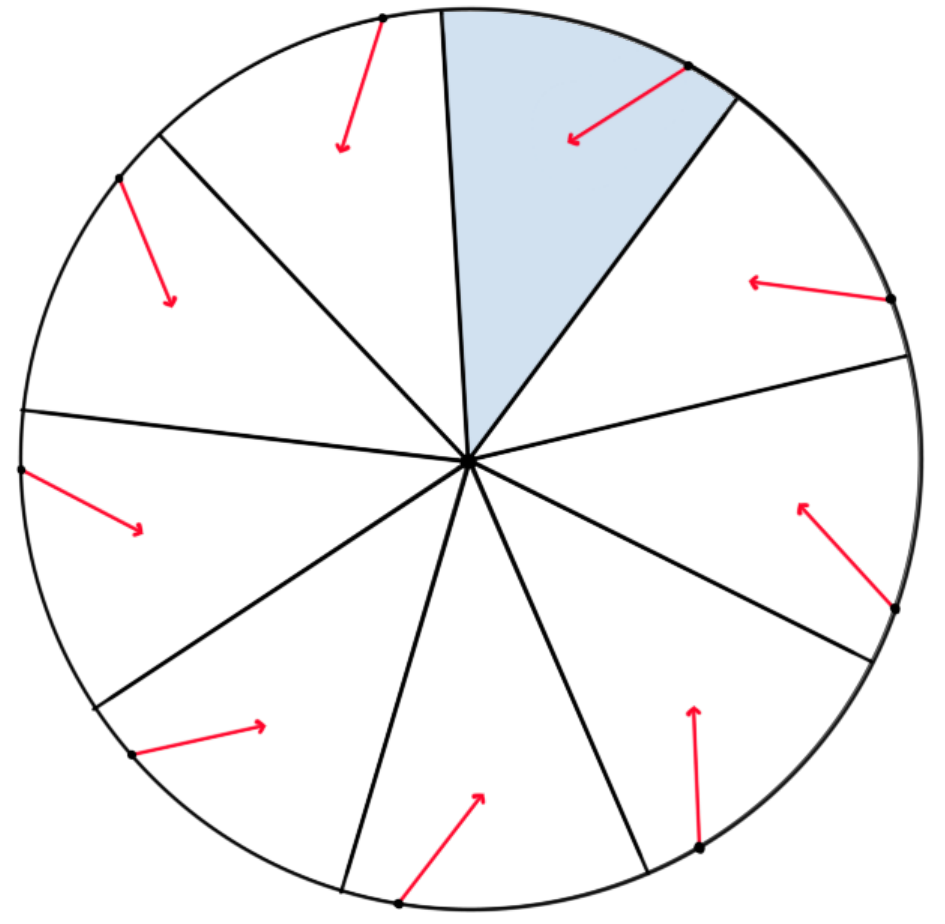}
				\caption*{(b)}
				\vspace*{0.4cm}
			\end{minipage}
			\caption{\textbf{(a)} Section of the $\SP^1$-principal bundle $(\SP^1 \times \SP^1)/H_k \to \SP^1/H_k $. \textbf{(b)} The copy of this section on the  bundle  $\SP^1 \times \SP^1 \to \SP^1$.}
			\label{liftingvectorfield}
		\end{figure}

		Since $s=m_ks_k$ in $H_1\big(\SP(TB),\QQ \big)$,
		$$\sigma|_{\partial B'} =  \Bigg(-f-\sum_{k=1}^n \frac1{m_k}\Bigg)s$$
		and $\chi(B) =f+\sum_{k=1}^n \frac1{m_k} $, where $\sigma|_{\partial D_0} = f s$.
		
		By the same argument, in $H_1(\SP(T\tilde B),\QQ)$ we have
		\[\sigma|_{\partial B'} =  \Bigg(-f-\sum_{k=1}^n 1\Bigg)s\]
		and $\chi(\tilde B) = f+n$. It remains to compare the formulas for $\chi(B)$ and $\chi(\tilde B)$.
	\end{proof}
	
	The following theorem states that Euler numbers of $\SP^1$-orbibundles are not arbitrary rational numbers but form a particular discrete subset of $\RR$.
	
	\begin{thm}\label{thm classification circle bundles} Consider the connected, compact, and oriented $2$-orbifold  $B$ with isolated singularities $x_1,\ldots,x_n$, where $m_k$ is the order of $x_k$. The Euler number of an $\SP^1$-orbibundle $\zeta:M\to B$ belongs to $ \ZZ + \frac1{m_1}\ZZ + \cdots +\frac1{m_n}\ZZ$.

	\end{thm}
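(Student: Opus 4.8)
The plan is to run the argument of Theorem~\ref{eulerchar} with an arbitrary section in place of the one built from outward-pointing vector fields, recording one integer at each cone point. Let $\zeta:M\to B$ be an $\SP^1$-orbibundle. Fix a regular point $x_0$ and, as in Definition~\ref{definition: euler number}, remove small smooth discs $D_0,D_1,\dots,D_n$ centred at $x_0,x_1,\dots,x_n$; then $B'=B\setminus\bigsqcup_k D_k$ is a compact surface with boundary, $\zeta$ restricts over $B'$ to an honest (hence trivial) $\SP^1$-bundle, and we choose a global section $\sigma$ of it. By Definition~\ref{definition: euler number}, $\sigma|_{\partial B'}=-e(M)s$ in $H_1(M,\QQ)$; since the boundary orientations are opposite, $\sigma|_{\partial B'}=-\sum_{k=0}^n\sigma|_{\partial D_k}$. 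So it is enough to prove that $\sigma|_{\partial D_k}=\tfrac{d_k}{m_k}\,s$ in $H_1(M,\QQ)$ for some $d_k\in\ZZ$ (put $m_0=1$), for then $e(M)=d_0+\sum_{k=1}^n\tfrac{d_k}{m_k}\in\ZZ+\tfrac1{m_1}\ZZ+\cdots+\tfrac1{m_n}\ZZ$.

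To handle $\sigma|_{\partial D_k}$ I would localise at $x_k$. By the Proposition of Subsection~\ref{subsection: orbibundles}, pick an orbifold chart $\phi_k:2\BB^2/\Gamma_k\to 2D_k$ centred at $x_k$ and a trivialisation $(2\BB^2\times\SP^1)/\Gamma_k\cong\zeta^{-1}(2D_k)$ in which $\Gamma_k=\langle\exp(2\pi i/m_k)\rangle$ acts on the $\SP^1$-factor by a fixed action independent of the base coordinate. Restricting to the closed disc identifies $\zeta^{-1}(\overline{D}_k)$ with the solid torus $\TT/\Gamma_k$ of Lemma~\ref{circle section lemma}, with $\partial D_k\cong\SP^1/\Gamma_k$ and with $\sigma|_{\partial D_k}$ corresponding to a continuous section of the principal bundle $(\SP^1\times\SP^1)/\Gamma_k\to\SP^1/\Gamma_k$. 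I then claim that $\sigma|_{\partial D_k}$ lifts, through the $m_k$-fold covering $G:\SP^1\times\SP^1\to(\SP^1\times\SP^1)/\Gamma_k$ (a covering because $\Gamma_k$ acts freely, rotating the first factor), to a section $\tilde\sigma_k:\SP^1\to\SP^1\times\SP^1$ of the trivial bundle $\TT/\{1\}\to\overline{\BB}^2$ over $\partial\overline{\BB}^2$ satisfying $G\tilde\sigma_k=\sigma|_{\partial D_k}\circ g$, i.e., sitting on top of $\sigma|_{\partial D_k}$ in the sense of Lemma~\ref{circle section lemma}. Indeed, $G_\ast\pi_1(\SP^1\times\SP^1)$ has index $m_k$ with cyclic quotient $\Gamma_k$, so it contains $m_k\pi_1$, while $(\sigma|_{\partial D_k}\circ g)_\ast=(\sigma|_{\partial D_k})_\ast\circ g_\ast$ and $g_\ast$ is multiplication by $m_k$; hence the image of $(\sigma|_{\partial D_k}\circ g)_\ast$ lies inside $G_\ast\pi_1$ and the lift exists by covering-space theory. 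Being a loop in $\overline{\BB}^2\times\SP^1$ whose projection to $\partial\overline{\BB}^2$ bounds a disc, $\tilde\sigma_k$ equals $d_k s_2'$ in $H_1(\TT/\{1\},\QQ)$, where $d_k\in\ZZ$ is its winding number around the $\SP^1$-fibre.

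I would then feed this into Lemma~\ref{circle section lemma} with $\Gamma_1=\Gamma_k$ and $\Gamma_2=\{1\}$: from $\tilde\sigma_k=d_k s_2'$ the third item gives $\sigma|_{\partial D_k}=d_k s_1'=d_k s_k'$ in $H_1(\zeta^{-1}(\overline{D}_k),\QQ)$, and the second item gives $s_k=m_k s_k'$, where $s_k$ is the $\SP^1$-fibre over the regular point $\phi_k([1])$. That fibre is freely homotopic, hence homologous in $H_1(M,\QQ)$, to the fixed fibre $s$, so $\sigma|_{\partial D_k}=\tfrac{d_k}{m_k}\,s$ for $k=1,\dots,n$; for $k=0$ the same argument with trivial isotropy gives $\sigma|_{\partial D_0}=d_0 s$ with $d_0\in\ZZ$. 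Summing, $e(M)=d_0+\sum_{k=1}^n\tfrac{d_k}{m_k}$, which proves the theorem; in particular the Euler numbers of $\SP^1$-orbibundles over $B$ form a discrete subset of $\RR$, lying in $\tfrac1{\mathrm{lcm}(m_1,\dots,m_n)}\ZZ$.

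The computation is bookkeeping once Lemma~\ref{circle section lemma} is available, and the point that requires care---the likely obstacle for a careful reader---is the construction of the lift $\tilde\sigma_k$ together with the matching of all the solid-torus pictures with those of Lemma~\ref{circle section lemma}: one must check that the $\Gamma_k$-action on the circle fibres is precisely the one supplied by the chart and that $\tilde\sigma_k$ genuinely lies on top of $\sigma|_{\partial D_k}$, so that the lemma's hypothesis is literally met. The integrality of the winding numbers $d_k$, the replacement of $s_k$ by $s$, and the final summation are then routine.
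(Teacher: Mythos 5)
Your proposal is correct and follows essentially the same route as the paper: remove discs around the singular (and one regular) point, take a global section over the resulting surface with boundary, and use Lemma \ref{circle section lemma} to write $\sigma|_{\partial D_k}=q_k s_k'=\frac{q_k}{m_k}s$ with $q_k\in\ZZ$ before summing over the boundary components. The only difference is that you spell out the covering-space lifting argument that justifies the integrality of $q_k$, which the paper leaves implicit in its proof of this theorem but carries out explicitly in the analogous step of the proof of Theorem \ref{eulerchar}.
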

	\begin{proof} Let $x_0$ be a regular point in $B$, which has order $m_0=1$. As in the proof of Theorem \ref{eulerchar}, extract from $B$ small open discs $D_k$, $k=0,1,\dots,n$, corresponding to the image of $\BB^2/\Gamma_k$ under the orbifold chart $\phi_k:2\BB^2/\Gamma_k \to 2D_k$ centered at $x_k$, obtaining the surface $B'$ with boundary. The bundle $M|_{B'} \to B'$ is trivial (because $B'$ has the homotopy type of a graph) and, consequently, there exists a global section $\sigma: B' \to M|_{B'}$. If $s_k$ is the fiber over $x_k$, then $\sigma|_{\partial D_k}=q_k s_k$ in homology for some $q_k\in \ZZ$. 
    Considering an arbitrary regular fiber $s$ and the identity $s = \frac{m_k}{d_k} s_k$, with $d_k \in \ZZ$,
    which follows from Lemma \ref{circle section lemma}, we conclude that
		$$\sigma|_{\partial B} = - \sum_{k=0} \frac{d_k q_k}{m_k}s$$
	in $H_1(M,\QQ)$. The result follows from Definition \ref{definition: euler number}.
	\end{proof}
	
	\section{Euler number for orbigoodles}

	An orbifold $B_1$ is a {\bf good orbifold} if it is orbifold covered by a simply connected manifold $\HH$. If $G_1$ is the orbifold fundamental group of $B_1$, i.e., the deck group of a fixed orbifold covering map $\HH \to B_1$, then $B_1 = \HH/G_1$, where $G_1$ acts properly discontinuously on $\HH$.  If $p:B_2 \to B_1$ is an orbifold covering, then there exists $G_2 \subset G_1$ such that $B_2 = \HH/G_2$ and $p$ is the quotient map.
	
	\begin{defi}\label{definition: orbigoodle} Let $V$ be a diffeological space. Consider an action of $G_1$ on $\HH \times V$ by diffeomorphisms such that $g(x,v)=(gx,a(g,x)v)$, where $g\in G_1$ and $(x,v) \in \HH \times V$. An {\bf orbigoodle (good orbibundle)} with fiber $V$ is the natural projection $\zeta_1:L_1 \to B_1$ where $L_1 := (\HH \times V)/G_1$. If $G_2$ is a subgroup of $G_1$, then {\bf the pullback of the orbigoodle}  $\zeta_1$ by the orbifold covering $p:B_2 \to B_1$ is the orbigoodle $\zeta_2:L_2 \to B_1$ given by restricting the action of $G_1$ to $G_2$ with the natural map $P$ according to the diagram
	$$
	\begin{tikzcd}
	L_2 \arrow[r, "P"] \arrow[d, "\zeta_2"'] & L_1 \arrow[d, "\zeta_1"] \\
	B_2 \arrow[r, "p"]                                  & B_1                                
	\end{tikzcd}
	$$
	Sometimes, we denote $L_2$ by $p^\ast L_1$.
	\end{defi}

	In the case $G_1' =h G_1 h^{-1}$, where $h:\HH \to \HH'$ is a diffeomorphism (or, equivalently, $G_1'$ is the deck group of another universal orbifold covering map $\HH' \to B_1$), there is a natural action of $G_1'$ on $\HH' \times V$ which arises from the action of $G_1$ on $\HH \times V$ and is defined by $$g_1'(x,v) := \big(g_1'x,a(h^{-1}g_1'h,h^{-1}g_1'h x)v\big).$$ 
	
	The map $H:(\HH \times V)/G_1 \to (\HH' \times V)/{G_1'}$, given by $H[x,v]:=[hx,v]$, is a bundle isomorphism on the top of $h:\HH /G_1 \to \HH/{G_1'}$. Therefore, an orbigoodle brings forth natural orbigoodles for each universal orbifold covering of $B_1$, and all these orbigoodles are isomorphic.

	\begin{rmk} The orbigoodle structure depends on how $G_1$ acts on $\HH \times V$. If $V$ is an $n$-dimensional vector space and the action is by (orientation preserving) linear isomorphisms, that is, each map $a(g,x):V \to V$ is a linear isomorphism, then we obtain a (oriented) {\bf vector orbigoodle} of rank~$n$. If $V$ is the group $\SP^1$ and the action is by multiplication, then we obtain an {\bf $\SP^1$-orbigoodle}. If $V=\BB^2$ and the action is by diffeomorphisms, then we obtain a {\bf disc orbigoodle}.
	\end{rmk}

	\begin{thm}\label{euler thm} If $B_1$ is a connected, compact, oriented good $2$-orbifold, the orbifold covering map $p:B_2 \to B_1$ has degree $d$, and $\zeta:L \to B_1$ is an oriented vector orbigoodle of rank $2$, then  $$e(p^\ast L)=de(L).$$
	\end{thm}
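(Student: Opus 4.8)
The plan is to reduce the statement to $\SP^1$-orbigoodles and then run a boundary-section count parallel to the proofs of Theorems \ref{eulerchar} and \ref{thm classification circle bundles}, the new ingredient being an elementary identity relating the order of a singular point of $B_1$ to the orders of its preimages in $B_2$. First I would pass to circles: the Euler number of an oriented rank-$2$ vector orbigoodle is by definition the Euler number of its sphere orbibundle, and since the sphere construction is fibrewise and commutes with restricting the $G_1$-action to $G_2$, fixing a metric on $L$ (cf.\ the remark following \eqref{eq:poincare hopf}) turns $\SP(L)$ into an $\SP^1$-orbigoodle and produces a canonical isomorphism $\SP(p^\ast L)\cong p^\ast\SP(L)$; the Euler number does not depend on the metric. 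Thus it suffices to prove $e(p^\ast M)=d\,e(M)$ for an $\SP^1$-orbigoodle $\zeta\colon M=(\HH\times\SP^1)/G_1\to B_1$, where $p^\ast M=(\HH\times\SP^1)/G_2$ and $[G_1:G_2]=d$.

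Next I would record the local structure of $p$. Let $x_1,\dots,x_n$ be the singular points of $B_1$ of orders $m_1,\dots,m_n$, fix a regular point $x_0$ (order $m_0=1$), and choose lifts $\tilde x_k\in\HH$ with $\Gamma_k:=\stab_{G_1}(\tilde x_k)$ cyclic of order $m_k$. Every singular point of $B_2$ lies over some $x_k$, since $\stab_{G_2}\subseteq\stab_{G_1}$; the fibre $p^{-1}(x_k)$ is the set of double cosets $G_2\backslash G_1/\Gamma_k$, and the point $y$ represented by $G_2 g\Gamma_k$ has order $m_y=|G_2\cap g\Gamma_k g^{-1}|$. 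The key identity is
$$\sum_{y\in p^{-1}(x_k)}\frac{m_k}{m_y}=d,$$
which I would obtain by decomposing the $d$-element $\Gamma_k$-set $G_1/G_2$ into orbits: the orbit through $gG_2$ has stabiliser $\Gamma_k\cap gG_2g^{-1}$, hence cardinality $m_k/m_y$ for the corresponding $y\in p^{-1}(x_k)$, and distinct orbits give distinct points of $p^{-1}(x_k)$.

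Then I would carry out the count. Remove from $B_1$ small discs $D_0,\dots,D_n$ around $x_0,\dots,x_n$, modelled on charts $2\BB^2/\Gamma_k\to 2D_k$, put $B_1'=B_1\setminus\bigsqcup_k D_k$, and let $B_2'=p^{-1}(B_1')$, which is $B_2$ with a small disc $D_y$ removed around each $y\in p^{-1}(x_0)\cup\cdots\cup p^{-1}(x_n)$ and hence a genuine compact surface with boundary carrying no singular points. Fix a global section $\sigma$ of $M|_{B_1'}\to B_1'$ and let $\hat\sigma$ be its pullback, a section of $(p^\ast M)|_{B_2'}$. Using the Proposition of Subsection \ref{subsection: orbibundles} to take the local $\Gamma_k$-action on $\BB^2\times\SP^1$ independent of the base point, $M$ over $\overline D_k$ is a copy of $\TT/\Gamma_k$, and over $\overline D_y$ with $y\in p^{-1}(x_k)$ the orbigoodle $p^\ast M$ is $\TT/\Gamma_y$ with $\Gamma_y\subseteq\Gamma_k$ and the same $\SP^1$-action restricted; moreover $\hat\sigma|_{\partial D_y}$ sits on top of $\sigma|_{\partial D_k}$ by construction. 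Writing $s_k$ (resp.\ $s_y$) for the fibre over $x_k$ (resp.\ $y$) and $s$ (resp.\ $s_2$) for a regular fibre of $M$ (resp.\ $p^\ast M$), Lemma \ref{circle section lemma} gives $s=m_k s_k$ and $s_2=m_y s_y$; lifting $\sigma|_{\partial D_k}$ through the $m_k$-fold cover to the trivial bundle $\SP^1\times\SP^1\to\SP^1$ it also gives $\sigma|_{\partial D_k}=q_k s_k$ for some $q_k\in\ZZ$, whence $e(M)=\sum_{k=0}^n q_k/m_k$ by Definition \ref{definition: euler number}; and applied to $\TT/\Gamma_y\to\TT/\Gamma_k$ its third item gives $\hat\sigma|_{\partial D_y}=q_k s_y=\tfrac{q_k}{m_y}\,s_2$ in $H_1(p^\ast M,\QQ)$ with the \emph{same} integer $q_k$. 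Summing over all removed discs of $B_2'$ and invoking the identity above,
$$\hat\sigma|_{\partial B_2'}=-\sum_{k=0}^n q_k\Big(\sum_{y\in p^{-1}(x_k)}\frac1{m_y}\Big)s_2=-\sum_{k=0}^n\frac{d\,q_k}{m_k}\,s_2=-d\,e(M)\,s_2,$$
so that $e(p^\ast M)=d\,e(M)$ by Definition \ref{definition: euler number}.

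The main obstacle is organisational rather than conceptual. One must check that a single integer $q_k$ governs the boundary of $D_k$ and the boundary of \emph{every} $D_y$ lying over it — this is exactly the content of the ``same $q$'' in the third item of Lemma \ref{circle section lemma} — and that the local model of $p^\ast M$ near each $y$ is genuinely the local model of $M$ near $x_k$ with the group passed from $\Gamma_k$ down to $\Gamma_y\subseteq\Gamma_k$, so that the lemma applies verbatim. Granting these identifications, the counting identity $\sum_{y}m_k/m_y=d$ converts the local contribution of $x_k$ to $e$ into exactly $d$ times as much on $B_2'$, and summing over $k$ finishes the proof.
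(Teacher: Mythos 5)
Your proposal is correct and follows essentially the same route as the paper's proof: remove discs around the singular (and one regular) point, pull the boundary section up through the covering, apply the third item of Lemma \ref{circle section lemma} to transfer the same integer $q_k$ to each disc upstairs, and conclude via the orbit-counting identity $\sum_{y\in p^{-1}(x_k)} m_k/m_y = d$, which is exactly the paper's $\sum_i |\Gamma_k|/|\mathrm{stab}_{\Gamma_k}(f_i^k)| = |F| = d$. Your double-coset bookkeeping and the tautological pullback of the section are just mild repackagings of the paper's decomposition $p^{-1}(D_k)=\coprod_i D(f_i^k)$ and its covering-space lift of $\xi_1$.
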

	\begin{proof} Postponed to the end of the section.
	\end{proof}
	
	A direct consequence of this result is the identity $\chi\big(\HH/G_1\big) = [G_1:G_2] \chi\big(\HH/G_2\big)$.
	
	\begin{defi} If $B$ is a connected, compact, oriented good $2$-orbifold and $L \to B$ is an oriented real vector orbigoodle of rank $2$, then the number $e(L)/\chi(B)$ is called the {\bf relative Euler number} of $L$. It is invariant under pullbacks by orbifold coverings of finite degree.
	\end{defi}
	
	
	Now we examine the behavior of orbigoodles under pullbacks, according to Definition \ref{definition: orbigoodle}. Since $B_1=\HH/G_1$ admits a Riemannian metric, we assume $\HH$ is a Riemannian manifold and $G_1$ is a group of isometries acting properly discontinuously on $\HH$, i.e., the orbits are discrete and the stabilizers of points are finite. Let us locally trivialize the orbigoodle $\zeta_1:L_1 \to B_1$.
	
	Given $x \in \HH$, there exists $r>0$ such that $B(x,r)\cap gB(x,r) = \varnothing$ for every $g\in G_1\setminus \stab_{G_1}(x)$, where $B(x,r)$ stands for the ball in $\HH$ of radius $r$ centered at $x$. We have the trivializations 
	$$
	\begin{tikzcd}
	{(B(x,r)\times V)/\stab_{G_1}(x)} \arrow[d] \arrow[r] & \zeta_1^{-1}(D) \arrow[d, "\zeta_1"] \\
	{B(x,r)/\stab_{G_1}(x)} \arrow[r]            & D                               
	\end{tikzcd}
	$$
	where $D= B(x,r)/\stab_{G_1}(x)$, $\zeta_1^{-1}(D)=(B(x,r)\times V)/\stab_{G_1}(x)$, and the horizontal maps are identities.
	
	Associated to the orbifold covering $p:B_2 \to B_1$, where $B_2 = \HH/G_2$ for a subgroup $G_2$ of $G_1$ and $p$ is the quotient map, we have the pullback $L_2$ of $L_1$ by $p$. The following lemma proves that $p$ is indeed an orbifold cover and lay down the reasoning for why $P: L_2 \to L_1$ behaves as a covering map as well.
	Writing $F:=G_1/G_2$, the disjoint orbits of the action of $G_2$ on $G_1$ on the left, we obtain the model fiber for the covering map $p$ with discrete diffeology. The group $G_1$ acts on $F$ on the right. 
	
	\begin{lemma} \label{lemma: good covering}
	The map $p:\HH/G_2 \to \HH/G_1$ is an orbifold cover with fiber $F$.
	\end{lemma}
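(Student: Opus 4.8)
The plan is to produce, around each point of $B_1$, the local model required by the definition of an orbibundle (Diagram \eqref{definition orbibundle}), taking $F$ as a discrete fiber; once this is in place, $p$ is by definition an orbifold covering map (and, as noted after that definition, $\HH/G_2$ is then automatically an orbifold). First I would fix $q\in B_1$, a lift $x\in\HH$ with $p_1(x)=q$ (where $p_1:\HH\to B_1$ and $p_2:\HH\to B_2=\HH/G_2$ are the quotient maps, so $p_1=p\circ p_2$), the finite stabilizer $\Gamma:=\stab_{G_1}(x)$, and $r>0$ with $gB(x,r)\cap B(x,r)=\varnothing$ for every $g\in G_1\setminus\Gamma$ --- exactly the data already arranged just before the lemma. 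As there, $D:=p_1\big(B(x,r)\big)=B(x,r)/\Gamma$ is the domain of an orbifold chart $\phi$ centered at $q$ (identify $B(x,r)$ with $\BB^n$ via $\exp_x$, on which $\Gamma$ acts as a subgroup of $\OO(n)$ because $G_1$ acts by isometries).

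Next, since $p_1=p\circ p_2$, one has $p^{-1}(D)=p_2\big(p_1^{-1}(D)\big)=p_2\big(G_1\cdot B(x,r)\big)$, and by the choice of $r$ the set $G_1\cdot B(x,r)$ is the \emph{disjoint} union $\bigsqcup_{g\Gamma\in G_1/\Gamma}gB(x,r)$. I would endow $B(x,r)\times F$ with the $\Gamma$-action $h\cdot(y,G_2g):=\big(hy,\,G_2gh^{-1}\big)$ (writing an element of $F$ as a coset $G_2g$ and using that $G_1$, hence $\Gamma$, acts on $F$ on the right); this has the form $h(y,f)=(hy,a(h)f)$ with $a(h)\in\mathrm{Diff}(F)$ the right translation by $h^{-1}$, independent of $y$ and smooth into $\mathrm{Diff}(F)$ for the discrete diffeology on $F$. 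Then I would define
$$
\Phi:\big(B(x,r)\times F\big)/\Gamma\longrightarrow p^{-1}(D),\qquad
\Phi\big([y,G_2g]\big):=p_2(gy),
$$
and check that it is well defined: $p_2(gy)$ is unchanged when $g$ is replaced by $g_2g$ with $g_2\in G_2$, and $\Phi\big(h\cdot[y,G_2g]\big)=p_2\big((gh^{-1})(hy)\big)=p_2(gy)$. One also has $p\big(\Phi[y,G_2g]\big)=p_1(gy)=p_1(y)=\phi\big(\mathrm{pr}_1[y,G_2g]\big)$, so $\Phi$ completes the commutative square of Diagram \eqref{definition orbibundle} with $\zeta=p$.

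What remains is to show $\Phi$ is a diffeomorphism. Surjectivity is immediate: an element of $p^{-1}(D)$ is $p_2(z)$ with $p_1(z)\in D$, hence $z=g^{-1}y$ for some $g\in G_1$, $y\in B(x,r)$, and $\Phi[y,G_2g^{-1}]=p_2(z)$. For injectivity, from $p_2(gy)=p_2(g'y')$ one gets $g_2gy=g'y'$ for some $g_2\in G_2$, so $g_2gB(x,r)$ meets $g'B(x,r)$ and the disjointness above forces $g_2g\Gamma=g'\Gamma$; writing $g'=g_2gh$ with $h\in\Gamma$ then gives $y=hy'$ and $G_2g'=G_2gh$, i.e. $[y',G_2g']=h^{-1}\cdot[y,G_2g]$. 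For the diffeomorphism claim I would use the decomposition $\big(B(x,r)\times F\big)/\Gamma=\bigsqcup_{[f]\in F/\Gamma}B(x,r)/\Gamma_f$, with $\Gamma_f$ the stabilizer of $f$ in $\Gamma$: on the summand indexed by $f=G_2g_0$, $\Phi$ is induced by the isometry $z\mapsto g_0z$ of $\HH$ followed by $p_2$, hence is an open embedding onto $p_2\big(g_0B(x,r)\big)$; these images are disjoint (injectivity) and cover $p^{-1}(D)$ (surjectivity), so $\Phi$ is a diffeomorphism. Finally $F$ is countable because $G_1$ acts properly discontinuously on the manifold $\HH$, so $F$ qualifies as the discrete, countable model fiber and $p$ is an orbifold cover with fiber $F$. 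The main obstacle I anticipate is purely bookkeeping: keeping the cosets forming $F$, the right $\Gamma$-action on $F$, and the geometric disjointness of the translates $gB(x,r)$ mutually consistent throughout the well-definedness and injectivity arguments; everything else is a routine unwinding of the quotient diffeologies.
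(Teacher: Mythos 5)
Your proof is correct and follows essentially the same route as the paper's: both identify $p^{-1}(D)$ with the disjoint union $\coprod_{[f]\in F/\Gamma}B(x,r)/\Gamma_f$ via coset representatives acting on $B(x,r)$, the only difference being that you package the trivialization as one global map $[y,G_2g]\mapsto p_2(gy)$ while the paper composes the natural decomposition with the map $[z]\mapsto[s_iz]$ on each summand. The well-definedness, injectivity, and disjointness checks you carry out are exactly the verifications implicit in the paper's argument.
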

	\begin{proof}
	Consider the group $\Gamma: = \stab_{G_1}(x)$ acting on $F$ on the right and the quotient $F/\Gamma = \{f_i \Gamma\}$. Also fix a representative $s_i \in G_1$ of $f_i \in F$ and denote by $\pi_2$ the quotient map $\HH \to \HH/G_2$.
		
	Following the notation used in the previous diagram, the balls $D_i:=\pi_2(B(s_ix,r))$ are pairwise disjoint, because $G_2 s_i = f_i$. Additionally, they do not depend on the representative $s_i$ of $f_i$.
	
	If $g_1 \in G_1$, then $G_2g_1\Gamma$ is one of the $f_i\Gamma$'s. Hence, there are $g_2 \in G_2$ and $h_1 \in \Gamma$ such that $g_2g_1h_1 =s_i$, and $$ \pi_2(B(s_ix,r))= \pi_2(B(g_2g_1h_1x,r)) = \pi_2(B(g_1x,r)).$$
	Therefore,
	$$p^{-1}(D) = \coprod_i\pi_2(B(s_ix,r)).$$	
	Furthermore,
	$B(s_ix,r)/\big(s_i\stab_{\Gamma}(f_i)s_i^{-1}\big) = \pi_2(B(s_ix,r))$
	and we conclude that
	$$
	p^{-1}(D) = \coprod_i B(s_ix,r)/\big(s_i\stab_{\Gamma}(f_i)s_i^{-1}\big).
	$$
	The diffeomorphism $$\psi: \coprod_i B(x,r)/(\stab_{\Gamma}(f_i)) \to \coprod_i B(s_ix,r)/(s_i\stab_{\Gamma}(f_i)s_i^{-1}),$$
	defined by $\psi([z],i) = ([s_iz],i)$ and the natural diffeomorphism
	$$\coprod_i B(x,r)/(\stab_{\Gamma}(f_i)) \simeq (B(x,r) \times F)/\Gamma$$
	produce a local trivialization of $p$
	$$
	\begin{tikzcd}
	(B(x,r) \times F)/\Gamma \arrow[d,"\psi"] \arrow[r] & {B(x,r)/\Gamma} \arrow[d,"\mathrm{id}"] \\
	p^{-1}(D) \arrow[r, "p"']                         & D                                
	\end{tikzcd}
	$$
	proving that $p$ is an orbifold covering map.
\end{proof}
	
	Applying to the map $P$ the same arguments as those in the proof of Lemma \ref{lemma: good covering} we obtain the commutative diagram 
	$$
	\begin{tikzcd}
	{{(B(x,r)\times V \times F)/\Gamma}} \arrow[rd, "\Phi"'] \arrow[ddd] \arrow[rrr] &                                                              &                                      & {(B(x,r)\times V)/\Gamma} \arrow[ld, "\mathrm{id}"] \arrow[ddd] \\
	& \zeta_2^{-1}(p^{-1}(D)) \arrow[d, "\zeta_2"'] \arrow[r, "P"] & \zeta_1^{-1}(D) \arrow[d, "\zeta_1"] &                                                              \\
	& p^{-1}(D) \arrow[r, "p"]                                     & D                                    &                                                              \\
	{(B(x,r)\times F)/\Gamma} \arrow[ru, "\phi"] \arrow[rrr]                         &                                                              &                                      & {B(x,r)/\Gamma} \arrow[lu, "\mathrm{id}"']                     
	\end{tikzcd}
	$$
	
	If $\zeta_1$ is a vector orbigoodle, then $P$ is a legit orbifold cover, and $P$ and $p$ have the same degree~$|F|=[G_1:G_2]$.
	
	\smallskip
	
	\begin{proof}[\bf Proof of Theorem \ref{euler thm}] Let $x_1,\ldots,x_n$ be the singular points of $B_1$ and $x_0$ be a regular point.
	For each $x_k\in B_1$ consider an orbifold chart $\phi_1^k:\BB^2/\Gamma_k \to D_k$ centered at $x_k$ on $B_1$ which trivializes the orbifold covering map $p$ and the real vector orbigoodle $\zeta_1$ of rank $2$ as described in the diagram
	$$
	\begin{tikzcd}
	\coprod_{i=1}^{l_k}(\BB^2\times \RR^2)/\mathrm{stab}_{\Gamma_k}(f_i^k) \arrow[r, "\Phi_2^k"] \arrow[dd, "\mathrm{pr}_1"'] & \zeta_2^{-1}(p^{-1}(D_k)) \arrow[d, "\zeta_2"'] \arrow[r, "P"] & \zeta_1^{-1}(D_k) \arrow[d, "\zeta_1"] & (\mathbb B^2 \times \mathbb R^2)/\Gamma_k \arrow[l, "\Phi_1^k"'] \arrow[dd, "\mathrm{pr}_1"] \\
	& p^{-1}(D_k) \arrow[r, "p"]                                & D_k                                    &                                                                                         \\
	\coprod_{i= 1}^{l_k}\BB^2/\mathrm{stab}_{\Gamma_k}(f_i^k) \arrow[ru, "\phi_2^k"] \arrow[rrr]                               &                                                           &                                        & \mathbb B^2/\Gamma_k \arrow[lu, "\phi_1^k"']                                                
	\end{tikzcd}
	$$
	where $F/\Gamma_k = \{f_i^k \Gamma_k\}_{i=1}^{l_k}$ is the set of disjoint orbits of $F$ given by the action of $\Gamma_k$.

	Consider the surfaces with boundary $B_1' := B_1 \setminus \sqcup_i D_i$ and $B_2' := p^{-1}(B_1')$, and a non-vanishing section $\xi_1:B_1' \to L_1|_{B_1'}$. The fundamental groups identities $$\pi_1(L_1|_{B_1'}) = \pi_1(B_1'),\quad  \pi_1(L_2|_{B_2'}) = \pi_1(B_2')$$ along with 
	$$  (\xi_1\circ p)_\ast (\pi_1(B_2')) \subset P_\ast (\pi_1(L_2|_{B_2'})) \subset \pi_1(L_1|_{B_1'})$$
	guarantees the existence of a section $\xi_2:B_2' \to L_2|_{B_2'}$ on the top of $\xi_1$. Indeed,
	for a point $x \in B_2'$ and a vector $v \in P^{-1}(\xi_1 \circ p(x))$, the map $\xi_1 \circ p:B_2' \to L_1|_{B_1'}$ can be lifted to $\xi_2:B_2' \to L_2|_{B_2'}$ by the covering map $P$ such that $\xi_2(x) = v$:
	
	$$
	\begin{tikzcd}
	& L_2|_{B_2'} \arrow[d, "P"] \\
	B_2' \arrow[r, "\xi_1 \circ p"'] \arrow[ru, "\xi_2"] & L_1|_{B_1'}               
	\end{tikzcd}
	$$
	
	The smooth map $\xi_2:B_2' \to L_2|_{B_2'}$ is a section and satisfy $P\circ \xi_2 = \xi_1 \circ p$.
	Define the sections $\sigma_1 = [\xi_1]$ and $\sigma_2=[\xi_2]$ on the $\SP^1$-bundle $\SP(L_1|_{B_1'})$ and $\SP(L_2|_{B_2'})$.
	
	Let us prove that if $s_1$ and $s_2$ are fibers of $\SP(L_1)$ and $\SP(L_2)$ over regular points, then whenever
	$$
	\sigma_1|_{\partial D_k}= \frac{q_k}{|\Gamma_k|} s_1
	$$
	in $H_1(\SP(L_1),\QQ)$, 
	$$
	\sigma_2|_{p^{-1}(\partial D_k)} =\frac{dq_k}{|\Gamma_k|} s_2
	$$
	in $H_1(\SP(L_2),\QQ)$ and, consequently, we obtain the relation
	$$e(L_2)=de(L_1).$$
	
	Indeed, observe that $$p^{-1}(D_k) = \coprod_{i=1}^{l_k} D(f_i^k),$$ where $D(f_i^k):=\phi_2^k\big(\BB^2/\mathrm{stab}_{\Gamma_k}(f_i^k)\big)$ is a disc with center $\phi_2^k([0])$. 
	
	By the third item on the Lemma \ref{circle section lemma} we have
	
	$$\sigma_2|_{\partial D(f_i^k)} =\frac{q_k}{|\mathrm{stab}_{\Gamma_k}(f_i^k)|} s_2$$
	in $H_1(\SP(L_2),\QQ)$. So,
	$$
	\sigma_2|_{\partial p^{-1}(D_k)} = \sum_{i=1}^{l_k}  \frac{q_k}{|\mathrm{stab}_{\Gamma_k}(f_i^k)|} s_2.
	$$
	Since
	$$
	\sum_{i=1}^{l_k}|\Gamma_k|/|\mathrm{stab}_{\Gamma_k}(f_i^k)| =|F| =d,
	$$
	because $F$ is the disjoint union of the orbits $f_i^k \Gamma_k$,
	we have 
	$$
	\sigma_2|_{\partial p^{-1}(D_k)} =\frac{dq_k}{|\Gamma_k|} s_2.
	$$
\end{proof}

	\section{Euler number via Chern-Weil theory} \label{chernweil}
    A differential $k$-form on a diffeological space $M$ is a function $\omega$ that maps each plot $\phi:U\to M$ to a differential $k$-form $\phi^\ast\omega$ on $U$ satisfying the following property: if $\phi:U \to M$ is a plot and $g:V \to U$ is a smooth function between Euclidean open sets, then $g^\ast(\phi^\ast \omega) = (\phi g)^\ast\omega$. The space $\Omega^k(M)$ of all differential $k$-forms is a real vector space. Moreover, the exterior derivative $d:\Omega^k(M) \to \Omega^{k+1}(M)$ can be defined by $\phi^\ast (\dd\omega) := \dd(\phi^\ast \omega)$. Similarly, the wedge product $\omega_1 \wedge \omega_2$ of two differential forms $\omega_1$ and $\omega_2$ is defined by $\phi^\ast(\omega_1 \wedge \omega_2):= (\phi^\ast\omega_1) \wedge (\phi^\ast \omega_2)$. Following the same reasoning, the pullback $f^\ast \omega$ of a differential form $\omega$ on $N$ under a smooth map $f:M \to N$ is given by $\phi^\ast(f^\ast \omega):=(f\phi)^\ast \omega$.

    Now, consider an $n$-orbifold $B$ and an orbifold chart $\phi:\BB^n/\Gamma \to D$. Denote by $\widetilde \phi:\BB^n \to D$ the map $\widetilde \phi(x) := \phi([x])$. The integral of an $n$-form $\omega$ over $D$ is defined by
	$$\int_D\omega :=\frac{1}{|\Gamma|} \int_{\BB^n} \widetilde{\phi}^\ast \omega.$$
	
	The intuition behind this definition is very simple. For the sake of argument, we assume that the orbifold is two-dimensional and $\Gamma=\langle \exp(2\pi i /m)\rangle$. The action of $\Gamma$ in $\BB^2$ has the sector $S:=\{z\in \BB^2\mid z=0 \text{ or } 0\leq \arg(z)\leq 2\pi/m \}$ as a fundamental domain. Therefore, the quotient $\BB^2/\Gamma$ is the cone obtained by gluing the sides of $S$. Taking a region $R$ in the cone, the inverse image $\widetilde R$ of $R$ by the projection $\BB^2 \to \BB^2/\Gamma$ is made of $|\Gamma|$ copies of $R$. Hence, it is natural to define the area of $R$ as the area of $\widetilde R$ divided by $|\Gamma|$. 
	
	Furthermore, we want to integrate over all the orbifold and, in order to do that, we imitate the definition of integral over smooth manifolds. If $B$ is a compact orbifold, then consider a finite open cover $D_1,\ldots,D_k$ of $B$ such that each $D_i$ comes from an orbifold chart $\phi_i:\BB^n/\Gamma_i \to D_i$. If $\rho_i$ is a smooth partition of unit subordinate to the cover $D_i$, then the integral of an $n$-form $\omega$ over $B$ is defined by
	$$\int_B \omega : =\sum_{i=1}^k \int_{D_i} \rho_i \omega$$
	(see \cite[pag.\,34-35]{alr} and \cite[pag.\,36-37]{car}).

	\begin{thm}\label{integral thm} If $p:B_2 \to B_1$ is an orbifold covering map of degree $d$, where $B_1,B_2$ are compact $n$-orbifolds, and $\omega$ is a differential $n$-form on $B_1$, then
		$$
		\int_{B_2} p^\ast \omega = d \int_{B_1} \omega. 
		$$
	\end{thm}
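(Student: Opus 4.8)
The plan is to unwind both sides of the asserted identity using the definition of the orbifold integral through local charts and a subordinate partition of unity, feeding in the local product structure of the orbifold covering map $p$ recorded right after the definition of orbibundle.

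First I would fix the data downstairs. Since $p$ is an orbifold covering map and $B_1$ is compact, choose a finite cover $D_1,\dots,D_k$ of $B_1$ by domains of orbifold charts $\phi_i:\BB^n/\Gamma_i\to D_i$ over which $p$ trivializes, together with a smooth partition of unity $\rho_1,\dots,\rho_k$ subordinate to it. Writing $F/\Gamma_i=\{f_1^i\Gamma_i,\dots,f_{l_i}^i\Gamma_i\}$ for the $\Gamma_i$-orbits of the model fiber $F$ of $p$, the local description of orbifold coverings gives $p^{-1}(D_i)=\coprod_{j=1}^{l_i}D_{i,j}$ with orbifold charts $\phi_{i,j}:\BB^n/\mathrm{stab}_{\Gamma_i}(f_j^i)\to D_{i,j}$ such that, under these identifications, $p|_{D_{i,j}}$ is the natural quotient projection $\BB^n/\mathrm{stab}_{\Gamma_i}(f_j^i)\to\BB^n/\Gamma_i$ followed by $\phi_i$; lifting to the balls this means $p\circ\widetilde\phi_{i,j}=\widetilde\phi_i$ as maps $\BB^n\to B_1$. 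Setting $\rho_{i,j}:=(\rho_i\circ p)|_{D_{i,j}}$ and extending by zero, the family $\{\rho_{i,j}\}$ is a smooth partition of unity on $B_2$ subordinate to $\{D_{i,j}\}$, because $\sum_i\rho_i\circ p=(\sum_i\rho_i)\circ p\equiv 1$ and $\rho_i\circ p$ is supported in $p^{-1}(\mathrm{supp}\,\rho_i)\subset\coprod_j D_{i,j}$.

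Then I would compute. From $p\circ\widetilde\phi_{i,j}=\widetilde\phi_i$ we get $\widetilde\phi_{i,j}^{\,*}\big((\rho_i\circ p)\,p^*\omega\big)=(\rho_i\circ\widetilde\phi_i)\,\widetilde\phi_i^{\,*}\omega$, so by the definition of the integral over a chart,
$$\int_{D_{i,j}}\rho_{i,j}\,p^*\omega=\frac{1}{|\mathrm{stab}_{\Gamma_i}(f_j^i)|}\int_{\BB^n}(\rho_i\circ\widetilde\phi_i)\,\widetilde\phi_i^{\,*}\omega.$$
Summing over $j$ and using $\sum_j|\Gamma_i|/|\mathrm{stab}_{\Gamma_i}(f_j^i)|=|F|=d$ (orbit--stabilizer, since $F$ is the disjoint union of the orbits $\Gamma_i f_j^i$) gives $\sum_j\int_{D_{i,j}}\rho_{i,j}\,p^*\omega=\frac{d}{|\Gamma_i|}\int_{\BB^n}(\rho_i\circ\widetilde\phi_i)\,\widetilde\phi_i^{\,*}\omega=d\int_{D_i}\rho_i\,\omega$. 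Summing over $i$ and using the definition of $\int_{B_2}$ with the partition of unity $\{\rho_{i,j}\}$ then yields $\int_{B_2}p^*\omega=\sum_{i,j}\int_{D_{i,j}}\rho_{i,j}\,p^*\omega=d\sum_i\int_{D_i}\rho_i\,\omega=d\int_{B_1}\omega$.

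The ingredients I would simply cite are the well-definedness of the orbifold integral (independence of the chosen cover and of the partition of unity) and the existence of smooth partitions of unity subordinate to a finite chart cover of a compact orbifold. The only genuine bookkeeping is the compatibility $p\circ\widetilde\phi_{i,j}=\widetilde\phi_i$ together with the orbit count $\sum_j|\Gamma_i|/|\mathrm{stab}_{\Gamma_i}(f_j^i)|=d$; the latter is precisely the counting that already produced the factor $d$ in the proof of Theorem~\ref{euler thm}, so I do not expect any essential obstacle beyond carefully threading the two levels of quotients.
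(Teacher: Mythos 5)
Your argument is correct and is exactly the computation the paper has in mind: its own proof of Theorem \ref{integral thm} is the one-line remark that the identity ``follows immediately from the definitions of integral and orbifold cover,'' and your unwinding via the local decomposition $p^{-1}(D_i)=\coprod_j \BB^n/\mathrm{stab}_{\Gamma_i}(f_j^i)$, the pulled-back partition of unity, and the orbit--stabilizer count $\sum_j|\Gamma_i|/|\mathrm{stab}_{\Gamma_i}(f_j^i)|=d$ supplies precisely the omitted details. No discrepancy with the paper's approach.
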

	\begin{proof}
		The fact follows immediately from the definitions of integral and orbifold cover.
	\end{proof}

	\begin{rmk}\label{important remark 2-orbifold = quotient}
	By \cite[Theorem 13.3.6]{thu}, every connected, compact, oriented $2$-orbifold $B$ with negative Euler characteristic is diffeomorphic to $\HH_\CC^1/G$, where $G$ is a cocompact Fuchsian group and, in particular, $\pi_1^\orb(B) = G$. Moreover, since every cocompact group is finitely generated, $B$ is finitely orbifold covered by a compact surface, because $G$ admits a normal torsion-free finite index subgroup by Selberg's lemma \cite[pag.\,331, Corollary\,5]{ratc}.
	\end{rmk}
	
    Consider a connected, compact, oriented good $2$-orbifold $B$ with negative Euler number. By Remark \ref{important remark 2-orbifold = quotient} above, we can assume $B = \HH_\CC^1/G$ for some Fuchsian group $G$. Consider as well an action of $G$ on $\HH_\CC^1 \times V$ that gives rise, as in Definition \ref{definition: orbigoodle}, to the vector orbigoodle $L:=(\HH_\CC^1 \times V)/G$ over $B$, where $V$ is an $n$-dimensional vector space over $\KK=\RR$ or $\CC$. In the real case we assume the vector orbigoodle to be oriented. 
	
	A connection on the orbigoodle $L \to B$ is a connection $\nabla$ on the trivial bundle $\HH_\CC^1 \times V \to \HH_\CC^1$ which is invariant under the action of $G$, i.e., $g^{-1}\nabla_{gu} gs = \nabla_u s$, where $s$ is a section of the trivial bundle, $u$ is a tangent vector of the hyperbolic plane, and the action of $G$ on sections is given by the formula $gs(x):=g s(g^{-1}x)$.

	\begin{lemma} A Riemannian metric on the vector orbigoodle $L \to B$ admits a metric connection.
	\end{lemma}
	\begin{proof}
    Consider an open cover by orbifold charts $B(x_i,r_i)/\Gamma_i$ of $B$, where $1 \leq i\leq k$ and $\Gamma_i$ is the stabilizer of $x_i$ on $G$. Let $f_i$ be a partition of unity subordinate to this cover. Over the open sets $U_i := \bigsqcup_{x \in Gx_i} B(x,r_i)$, the smooth maps $\rho_i:U_i \to [0,1]$ given by $\rho_i(x) := f_i[x]$ form a partition of unity of $\HH_\CC^1$ subordinate to $U_i$. 
    Since the vector orbibundle has a Riemannian metric, there is a $G$-invariant metric $\langle \cdot,\cdot\rangle$ on the trivial bundle $\HH_\CC^1 \times V \to \HH_\CC^1$.
		
	Fix a metric connection $\widetilde \nabla$ on the trivial bundle $\HH_\CC^1 \times V \to \HH_\CC^1$ and let $\nabla^i$ be a connection on $B(x_i,r_i)\times V \to B(x_i,r_i)$ given by the formula
	$$\nabla_v^i s :=\frac1{|\Gamma_i|} \sum_{h \in \Gamma_i} h^{-1}\widetilde \nabla_{hv}hs.$$
    
    The connection $\nabla^i$ is a metric connection with respect to $\langle \cdot,\cdot  \rangle|_{B(x_i,r_i)}$.
    
	For $B(g^{-1}x_i,r_i)\times V \to B(g^{-1}x_i,r_i)$ define $ \nabla_v^i s :=g^{-1}\nabla_{gv}^igs$. 
	The connection $\nabla^i$ is $G$-invariant on $U_i$ and compatible with the metric $\langle \cdot,\cdot \rangle|_{U_i}$. Hence, the connection 
	$\nabla := \sum_{i=1}^k \rho_i \nabla^i$ is $G$-invariant on the trivial bundle $\HH_\CC^1 \times V \to \HH_\CC^1$ and compatible with the metric $\langle \cdot,\cdot \rangle$.
	\end{proof}

	In what follows, $R$ is the Riemann curvature tensor of a $G$-invariant 
    metric connection 
    $\nabla$.
    
	In the real case with $n$ even, we have the $G$-invariant $n$-form $\pf(R)$, the Pfaffian of the curvature tensor. In the complex case, we have the first Chern number
	$c_1(L) := -\frac{1}{2\pi i} \int_B \tr(R).$

	\begin{thm} \label{chern=euler2} If $\zeta:L \to B$ is an oriented real vector orbigoodle of rank $2$ and $\nabla$ is a metric connection on $L$, then 
		$$e(L) = \frac{1}{2\pi}\int_B \pf(R).$$
	\end{thm}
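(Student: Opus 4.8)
The plan is to reduce to the classical Gauss--Bonnet theorem for rank $2$ bundles over a closed surface by passing to a finite orbifold cover, and then to use the multiplicativity of both sides under such covers.

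First I would fix the cover. By Remark \ref{important remark 2-orbifold = quotient} write $B=\HH_\CC^1/G$ with $G$ a cocompact Fuchsian group, and use Selberg's lemma to produce a torsion-free normal subgroup $G_0\triangleleft G$ of finite index $d:=[G:G_0]$. Set $\Sigma:=\HH_\CC^1/G_0$; since $G_0$ is torsion-free it acts freely, so $\Sigma$ is a closed oriented surface and the quotient map $p:\Sigma\to B$ is an orbifold covering of degree $d$ (Lemma \ref{lemma: good covering}). Hence $p^\ast L$ is an honest oriented real vector bundle of rank $2$ over $\Sigma$; it carries the pulled-back connection $p^\ast\nabla$, whose curvature is $p^\ast R$, and since the Pfaffian is a fibrewise natural algebraic operation (computed after fixing a $G$-invariant fibre metric on $L$, which we may assume $\nabla$ preserves) one gets $\pf(p^\ast R)=p^\ast\pf(R)$.

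Next I would invoke the smooth Chern--Gauss--Bonnet theorem: for an oriented rank $2$ real vector bundle $E$ with a metric connection over a closed oriented surface one has $e(E)=\frac1{2\pi}\int_\Sigma\pf(\text{curvature})$. If one prefers not to cite it, it can be obtained along the lines of the proof of Theorem \ref{eulerchar}: take a section transverse to the zero section, excise small discs around its zeros, apply Stokes' theorem to $\frac1{2\pi}\pf(R)$ on the complement, and identify each boundary contribution with the local index in \eqref{eq:poincare hopf} by a routine computation on the model bundle $D\times\RR^2$. In either form this gives $e(p^\ast L)=\frac1{2\pi}\int_\Sigma\pf(p^\ast R)$.

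Finally I would assemble the pieces. Theorem \ref{euler thm} gives $e(p^\ast L)=d\,e(L)$, and Theorem \ref{integral thm} gives $\int_\Sigma p^\ast\pf(R)=d\int_B\pf(R)$, so
$$d\,e(L)=e(p^\ast L)=\frac1{2\pi}\int_\Sigma\pf(p^\ast R)=\frac1{2\pi}\int_\Sigma p^\ast\pf(R)=\frac{d}{2\pi}\int_B\pf(R),$$
and dividing by $d\neq 0$ concludes. The only genuinely non-formal ingredient is the surface case of Gauss--Bonnet used in the third step; everything else is bookkeeping, the single point needing a little care being the compatibility of the topological sign convention for $e$ (defined through $\SP(L)$ and the orientation of the $\SP^1$-orbits as in Definition \ref{definition: euler number}) with the analytic normalization $\frac1{2\pi}\pf$, which one checks once on a model such as the tangent bundle of a closed surface.
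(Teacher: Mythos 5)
Your proposal is correct and follows essentially the same route as the paper: reduce to a closed surface via a finite orbifold cover obtained from Selberg's lemma, invoke the classical Gauss--Bonnet--Chern theorem there, and then divide by the degree using Theorem \ref{euler thm} for the left-hand side and Theorem \ref{integral thm} for the right-hand side. The extra details you supply (naturality of the Pfaffian under pullback, the sign/normalization check, and the sketch of the surface case) are sound elaborations of steps the paper leaves implicit.
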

	\begin{proof} The surface case is well-known (Gauss-Bonnet-Chern theorem). Otherwise, there exists an orbifold covering $p:B' \to B$ of degree $d$, where $B'$ is a compact, oriented surface, and we consider the pullback $p^\ast L$ of $L$ by $p$. From Theorems \ref{euler thm} and \ref{integral thm} we conclude that
		$$e(L) = \frac1d e(p^\ast L) =\frac1d \frac{1}{2\pi}\int_{B'} \pf(R) = \frac{1}{2\pi}\int_B \pf(R).$$
	\end{proof}
	\begin{thm} \label{chern=euler} If $\zeta:L \to B$ is an oriented complex line orbigoodle and $\nabla$ is a Hermitian connection on $L$, then 
		$$e(L) = c_1(L).$$
	\end{thm}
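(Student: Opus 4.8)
The plan is to imitate the proof of Theorem~\ref{chern=euler2}, reducing to the classical case of a complex line bundle over a closed oriented surface. A $\CC$-linear automorphism of $\CC$ is orientation-preserving as a real-linear map, so $L$ is canonically an oriented real vector orbigoodle of rank~$2$, and the asserted identity $e(L)=c_1(L)$ says precisely that the Chern--Weil integral $\frac{1}{2\pi i}\int_B\tr(R)$ computes the topological Euler number of that oriented plane orbigoodle. By Remark~\ref{important remark 2-orbifold = quotient}, $B=\HH_\CC^1/G$ carries a finite orbifold covering $p:B'\to B$ of some degree $d$ by a compact oriented surface $B'$; pull $L$ back to the honest complex line bundle $p^\ast L\to B'$ and equip it with the pulled-back connection, whose curvature is $p^\ast R$.

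For the descent I would use Theorem~\ref{euler thm}, which gives $e(p^\ast L)=d\,e(L)$, together with Theorem~\ref{integral thm}, which gives $\int_{B'}p^\ast\tr(R)=d\int_B\tr(R)$ and hence $c_1(p^\ast L)=d\,c_1(L)$. Granting the surface identity $e(p^\ast L)=c_1(p^\ast L)$, one then has $d\,e(L)=d\,c_1(L)$ and cancels the nonzero $d$, obtaining $e(L)=c_1(L)$.

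So everything comes down to the surface case, which is where the only real content lies. I would deduce it from Theorem~\ref{chern=euler2} applied to the underlying oriented real rank-$2$ bundle of $p^\ast L$: that theorem gives $e(p^\ast L)=\frac{1}{2\pi}\int_{B'}\pf(R_\RR)$, and it then remains to verify the pointwise identity of $2$-forms $\frac{1}{2\pi}\pf(R_\RR)=\frac{1}{2\pi i}\tr(R)$. Since neither side of the desired equality changes if the connection is replaced — the Euler number because it is topological, and the Chern number because two $G$-invariant connections differ by a globally defined bundle-valued $1$-form, so their curvature traces differ by an exact form on the closed surface — I may average a Hermitian metric over the finite stabilizers (as in the proof that a connection exists) and take the connection unitary. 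In a local unitary frame $s$ one then has $\nabla s=i\beta\,s$ with $\beta$ a real $1$-form, whence $R=\tr(R)=i\,d\beta$, while in the associated oriented real frame $(s,is)$ the real curvature $R_\RR$ is the skew-symmetric matrix with off-diagonal entry $\pm d\beta$, so $\pf(R_\RR)=d\beta$; both sides thus equal $\frac{1}{2\pi}\,d\beta$. The main obstacle is exactly this comparison — pinning down the factor of $i$ and the sign relating $\pf(R_\RR)$ to $\tr(R)$ in accordance with the normalizations used in Theorem~\ref{chern=euler2} and in the definition of $c_1$ — everything else being a formal consequence of results already established.
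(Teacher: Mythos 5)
Your proposal is correct and follows essentially the same route as the paper, whose proof of this theorem is literally ``analogous to the proof of Theorem~\ref{chern=euler2}'': reduce to a finite orbifold covering by a compact surface via Theorems~\ref{euler thm} and~\ref{integral thm}, then invoke the surface case. The only difference is that you spell out the classical surface identity $c_1=e$ for line bundles (via the pointwise comparison of $\pf(R_\RR)$ with $\tr(R)/i$ in a unitary frame), which the paper leaves implicit as well-known; that verification is sound up to the sign conventions you already flag.
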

	\begin{proof} Analogous to the proof of Theorem \ref{chern=euler2}.
	\end{proof}

	\newpage
	
	\section{Applications to complex hyperbolic geometry}
	In this section, the $2$-orbifolds are connected, compact, oriented, and have negative Euler characteristic.
	
	\subsection{Complex hyperbolic disc orbigoodles and $\PU(2,1)$-character varieties}
	
	Consider a disc orbigoodle $\zeta:M \to B$ over a $2$-orbifold $B$. The orbifold $M$ is complex hyperbolic if it is diffeomorphic to a $4$-orbifold $\HH_\CC^2/G$, where $G$ is a discrete subgroup of $\PU(2,1)$. Let~$p:\HH_\CC^1 \to B$ be a universal covering map of $B$. The pullback of $\zeta$ by $p$ provides the universal cover of $M$ as~$\HH_\CC^1 \times \BB^2$ with deck group $\pi_1^\orb(B)$. Therefore, the orbifold fundamental group of $M$ is isomorphic to $\pi_1^\orb(B)$ and, consequently, we obtain a discrete faithful representation $\rho: \pi_1^\orb(B) \to \PU(2,1)$ such that $\HH_\CC^2/\pi_1^\orb(B) = M$. Hence, complex hyperbolic disc orbigoodles arise from particular discrete faithful representations of $\pi_1^\orb(B)$ in $\PU(2,1)$. 
	
	The orbifold $\HH_\CC^2/G$ has a natural geometric structure. The Riemannian metric $g$ of $\HH_\CC^2$ can be induced in the $4$-orbifold because $g$ is $\PU(2,1)$-invariant. Furthermore, we say that $\HH_\CC^2/G_1$ and $\HH_\CC^2/G_2$ are isometric if there is an orientation preserving diffeomorphism $f:\HH_\CC^2/G_1 \to \HH_\CC^2/G_2$ such that $f^\ast g = g$. The quotient maps $P_i:\HH_\CC^2 \to \HH_\CC^2/G_i$ (which are universal covers) provide two universal covering maps $P_2$ and $P_2':=f \circ P_1$ for $\HH_2/G_2$. Hence, there exists a diffeomorphism $F:\HH_\CC^2 \to \HH_\CC^2$ such that the diagram
	$$
	\begin{tikzcd}
	\HH_\CC^2 \arrow[d, "P_1"] \arrow[rr, "F"] &  & \HH_\CC^2 \arrow[d, "P_2"] \\
	\HH_\CC^2/G_1 \arrow[rr, "f"]             &  & \HH_\CC^2/G_2            
	\end{tikzcd}
	$$
	commutes and the map $F$ is clearly an isometry. If we have a diffeomorphism $\HH_\CC^2/G_1 \simeq M$ then, by composing with the isometry $f$, we obtain $\HH_\CC^2/G_2 \simeq M$. The corresponding discrete faithful representations $\rho_i:\pi_1^\orb(B) \to \PU(2,1)$, $i=1,2$, have images $G_i$ and satisfy $\rho_2(\cdot) = F\rho_1(\cdot)F^{-1}$. Therefore, isometric complex hyperbolic structures on $M$ correspond to the same representation up to conjugation in $\PU(2,1)$.
	
	\begin{defi}\label{character variety} The {\bf $\PU(2,1)$-character variety} of the connected, compact, oriented $2$-orbifold $B$ with negative Euler characteristic is the space
		$$
		\hom\big(\pi_1^\orb(B),\PU(2,1)\big)/\PU(2,1),
		$$
		where $\hom\big(\pi_1^\orb(B),\PU(2,1)\big)$ is the space of all group homomorphisms $\pi_1^\orb(B) \to \PU(2,1)$ on which $\PU(2,1)$ acts by conjugation.
	\end{defi}
	
	It is clear from the above discussion that complex hyperbolic disc orbigoodles over $B$, considered up to isometry, can be seen as points in the $\PU(2,1)$-character variety of $B$. Nevertheless, not all representations correspond to complex hyperbolic orbigoodles (say, there are faithful non-discrete representations).
	
	\subsection{Discreteness of the Toledo invariant, holomorphic section identity and Toledo rigidity}

	\begin{lemma}\label{lemaequivariante} Consider a $2$-orbifold $\HH_\CC^1/G$ and a representation $\rho:G \to \PU(2,1)$, where $G$ is a Fuchsian group. There exists a smooth $G$-equivariant map $\HH_\CC^1 \to \HH_\CC^2$. Furthermore, if we have two such $G$-equivariant maps $f_0,f_1$, then there exists a smooth homotopy $f_t$, $t\in \RR$, such that $f_t$ is $G$-equivariant for every $t$.
	\end{lemma}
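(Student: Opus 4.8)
The plan is to build $f$ using the \emph{Riemannian center of mass} in $\HH_\CC^2$ (equivalently, to construct a smooth section of the flat $\HH_\CC^2$-bundle $(\HH_\CC^1 \times \HH_\CC^2)/G \to B$, where $G$ acts on the second factor through $\rho$; this should work because the fibre is contractible). The two facts I would rely on are: (i) since $\HH_\CC^2$ has pinched negative curvature it is a Hadamard manifold, so any two points are joined by a unique geodesic and, for a finitely supported probability measure $\mu = \sum_j w_j\,\delta_{p_j}$, the function $y \mapsto \sum_j w_j\,\dist(y,p_j)^2$ is strictly convex with uniformly positive definite Hessian and hence has a unique minimizer $\mathrm{bar}(\mu)$, which is $\Isom(\HH_\CC^2)$-equivariant and depends smoothly on the $w_j$ and $p_j$; and (ii) consequently every finite subgroup of $\PU(2,1)$ fixes a point of $\HH_\CC^2$ (take the barycenter of any orbit).

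For existence I would proceed as follows. Cover the compact orbifold $B = \HH_\CC^1/G$ by finitely many orbifold charts $\phi_i\colon \BB^2/\Gamma_i \to D_i$ ($1 \le i \le k$) lifted from balls $B(\tilde x_i,r_i) \subset \HH_\CC^1$ with $\Gamma_i = \stab_G(\tilde x_i)$ and $gB(\tilde x_i,r_i)\cap B(\tilde x_i,r_i) = \varnothing$ for $g \notin \Gamma_i$, so that $p^{-1}(D_i) = \bigsqcup_{g\Gamma_i\in G/\Gamma_i} gB(\tilde x_i,r_i)$. For each $i$ pick $q_i \in \HH_\CC^2$ fixed by the finite group $\rho(\Gamma_i)$, take a partition of unity $(\psi_i)$ on $B$ subordinate to $(D_i)$, and set $\tilde\psi_i := \psi_i \circ p$, a $G$-invariant function on $\HH_\CC^1$ supported in $p^{-1}(D_i)$ with $\sum_i \tilde\psi_i \equiv 1$. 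On $p^{-1}(D_i)$ there is a well-defined locally constant map $x \mapsto g_i(x)\Gamma_i$ with $x \in g_i(x)B(\tilde x_i,r_i)$, and $\rho(g_i(x))\,q_i$ does not depend on the chosen representative. Then define
\[
f(x) := \mathrm{bar}\Big(\sum_{i\,:\,\tilde\psi_i(x)>0}\tilde\psi_i(x)\,\delta_{\rho(g_i(x))\,q_i}\Big).
\]
Smoothness of $f$ follows from smoothness of $\mathrm{bar}$, since locally in $x$ the supporting points $\rho(g_i(x))\,q_i$ are constant and only the weights vary. Equivariance holds because $\tilde\psi_i(gx) = \tilde\psi_i(x)$ and $g_i(gx)\Gamma_i = g\,g_i(x)\Gamma_i$, so the measure attached to $gx$ is the $\rho(g)$-pushforward of the one attached to $x$; hence $f(gx) = \rho(g)f(x)$ by equivariance of $\mathrm{bar}$.

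For the homotopy, given $G$-equivariant maps $f_0$ and $f_1$ I would interpolate along geodesics: set $f_t(x) := \exp_{f_0(x)}\!\big(t\,\exp_{f_0(x)}^{-1}f_1(x)\big)$, the point of parameter $t$ on the unique geodesic of $\HH_\CC^2$ through $f_0(x)$ and $f_1(x)$. This is defined for all $t \in \RR$ by completeness, is smooth in $(t,x)$ because geodesics vary smoothly with their endpoints in a Hadamard manifold, and coincides with $f_0$ and $f_1$ at $t = 0,1$. Since $\rho(g)$ is an isometry it carries the geodesic through $f_0(x), f_1(x)$ to the one through $f_0(gx) = \rho(g)f_0(x)$ and $f_1(gx) = \rho(g)f_1(x)$, so $f_t(gx) = \rho(g)f_t(x)$ for every $t$.

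The main obstacle is establishing the properties of the barycenter map used above — existence, uniqueness, $\Isom$-equivariance, and especially smoothness in the weights and points — which is exactly where the pinched negative curvature of $\HH_\CC^2$ enters, via uniform convexity of the relevant distance-squared functions and the implicit function theorem; it also yields the fixed-point statement (ii). Everything else is routine bookkeeping with the $G$-action: that $p^{-1}(D_i)$ is the stated disjoint union, that $g_i$ is consequently well defined and locally constant on a neighbourhood of $\mathrm{supp}\,\tilde\psi_i$, and that $\tilde\psi_i = \psi_i\circ p$ is $G$-invariant with $\sum_i \tilde\psi_i \equiv 1$.
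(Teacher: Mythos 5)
Your proof is correct, but it takes a genuinely different route from the paper's. The paper works locally at the singular points: for each cone point $[x_i]$ it chooses a fixed point $p_i$ of the finite-order isometry $\rho(R_i)$ to define a section of the orbigoodle $(\HH_\CC^1\times\HH_\CC^2)/G\to\HH_\CC^1/G$ near $[x_i]$, and then extends to a global section by the standard obstruction-theoretic argument for bundles with contractible fiber over the (manifold) complement of the singular set; for the homotopy it first deforms the two sections to agree near the singular points, using that the fixed-point set of an elliptic isometry is connected (a point, a complex geodesic, or all of $\HH_\CC^2$), and then interpolates in the ball. You instead globalize immediately via the Riemannian center of mass: Cartan's fixed-point theorem supplies the $q_i$, a $G$-invariant partition of unity supplies the weights, and equivariance and smoothness of $\mathrm{bar}$ do the gluing; and your geodesic interpolation $f_t(x)=\exp_{f_0(x)}\bigl(t\exp_{f_0(x)}^{-1}f_1(x)\bigr)$ is automatically $G$-equivariant and smooth, so you bypass entirely the paper's case analysis of fixed-point sets in the homotopy step. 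The trade-off is that you must import the standard package of facts about barycenters in Hadamard manifolds (uniform convexity of $\sum_j w_j\,\dist(\cdot,p_j)^2$, uniqueness and smooth dependence of the minimizer via the implicit function theorem, $\Isom$-equivariance), which you correctly identify as the main technical input; these are classical and your use of them is sound, including the local bookkeeping showing that near any $x$ only finitely many charts contribute and the points $\rho(g_i(x))q_i$ are locally constant while only the weights vary. Your construction has the added virtue of being canonical given the choices of $q_i$ and the partition of unity, and the homotopy it produces is arguably cleaner than the paper's.
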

	\begin{proof}
	Let $[x_1],\ldots,[x_n]$ be the singular points of $\HH_\CC^1/G$, consider the orbigoodle $$\zeta:(\HH_\CC^1 \times \HH_\CC^2)/G \to \HH_\CC^1/G$$ and observe that each point $x_i$ of $\HH_\CC^1$ satisfies $\mathrm{stab}_{G}(x_i) \neq {1}$. Take a small geodesic disc $B(x_i,r_i)$ centered at $x_i$ and let $D_i$  be its image under the quotient map $\HH_\CC^1 \to \HH_\CC^1/G$. The trivialization of $\zeta$ is given by the commutative diagram
	$$
	\begin{tikzcd}
	\big(B(x_i,r_i)\times \HH_\CC^2\big)/\mathrm{stab}_{H}(x_i)  \arrow[r, "\Phi_i"] \arrow[d]       & \zeta^{-1}(D_i) \arrow[d, "\zeta"] \\
	B(x_i,r_i)/\mathrm{stab}_{H}(x_i) \arrow[r, "\phi_i"] & D_i                                 
	\end{tikzcd}
	$$
		
	Since $G$ is Fuchsian, the subgroup $\mathrm{stab}_G(x_i)$ is finite and, therefore, it is generated by a rotation $R_i$. Then $I_i:=\rho(R_i)$ has finite order and, consequently, is an elliptic isometry or the identity. Let $p_i$ be a fixed point of $I_i$ and define the map $$F_{x_i}: B(x_i,r_i)/\mathrm{stab}_{G}(x_i) \to(B(x_i,r_i)\times \HH_\CC^2)/\mathrm{stab}_{G}(x_i)$$ by the formula $F_{x_i}([x]) = ([x,p_i])$, producing a section on the neighborhood of each $[x_i]\in \HH_\CC^1/G$ as a consequence of the trivialization described in the above diagram. Now, we extend these sections constructed around each singular point to the entire orbifold  (a standard argument for fiber bundles over manifolds). Hence, we have a global section $F:\HH_\CC^1/G \to (\HH_\CC^1\times \HH_\CC^2)/G$. Considering $\widetilde F:\HH_\CC^1 \to (\HH_\CC^1\times \HH_\CC^2)/G$ given by $\widetilde F(x) = F([x])$ and writing $\widetilde F(x)=[x,f(x)]$ we obtain a $G$-equivariant smooth function $f:\HH_\CC^1 \to \HH_\CC^2$.
		
	Now, we consider two $G$-equivariant maps $f_0,f_1$ which define sections $$F_k:\HH_\CC^1/G \to (\HH_\CC^1\times \HH_\CC^2)/G$$ by the formula $F_k([x]) = ([x,f_k(x)])$. From the trivialization described above, $F_k$ is given, around the singular point $[x_i]$, by the map $\widetilde F_{k,i} = \Phi_i^{-1}\circ F_k \circ \phi_i$. Note that $f_0(x_i)$ and $f_1(x_i)$ are fixed points of $I_i$. Furthermore, deforming the sections we can assume that $\tilde F_{k,i}([x]) =[x,f_k(x_i)]$. 
		
	The set of fixed points of $I_i$ is always connected (see Subsection \ref{subsection complex hyperbolic geometry}): it can be a point or a complex geodesic when $I_i$ is elliptic, or the whole space when $I_i = 1$. Hence we can, in a small neighborhood of $[x_i]$, deform $F_0$ to $F_1$ smoothly and therefore assume $F_1=F_0$ near the singular points. The rest of the deformation is made in a smooth manifold, which is possible since $\HH_\CC^2$ is a ball. So, we have a homotopy $F_t$ between $F_0$ and $F_1$ and, thus, a homotopy $f_t$ between $f_0$ and $f_1$ such that $f_t$ is $G$-equivariant for all $t\in \RR$.
	\end{proof}
	
	Note that $\HH_\CC^2/\pi_1^\orb(B)$ is a diffeological space (but not necessarily  an orbifold!) and the Kähler form $\omega$ is well defined on it because it is invariant under the action of $\pi_1^\orb(B)$.
	
	By Lemma \ref{lemaequivariante}, there exist smooth maps $f:B \to \HH_\CC^2/\pi_1^\orb(B)$ satisfying the following property: for each universal orbifold covering map $\HH_\CC^1 \to B$ with deck group $G$, the map $f$ can be lifted to a smooth $G$-equivariant map $\tilde f: \HH_\CC^1 \to \HH_\CC^2$ such that the diagram
	$$
	\begin{tikzcd}
	\HH_\CC^1 \arrow[d] \arrow[rr, "\tilde f"] &  & \HH_\CC^2 \arrow[d] \\
	\HH_\CC^1/G \arrow[rr, "f"']               &  & \HH_\CC^2/G        
	\end{tikzcd}
	$$
	commutes. We call these functions {\bf good smooth maps} from $B$ to $\HH_\CC^2/\pi_1^\orb(B)$.
	
	\medskip
	\noindent{\bf Question:} Are all smooth maps $B \to \HH_\CC^2/\pi_1^\orb(B)$ good? The answer is likely negative. For the group $\Gamma= \langle \exp(2 \pi i /m) \rangle$, $m\geq 2$, the example $25$ of the paper \cite{IKZ} provides a smooth map $\CC/\Gamma \to \CC/\Gamma$ which does not admit $\Gamma$-equivariant lifts for any group morphism $\Gamma\to \Gamma$.

	\begin{defi}\label{toledo invariant} Let $B$ be a connected, compact, oriented $2$-orbifold with negative Euler characteristic. The {\bf Toledo invariant} of a representation $\rho:\pi_1^\orb(B) \to \PU(2,1)$ is given by the integral
		$$\tau(\rho) := \frac{4}{2\pi} \int_B f^\ast\omega,
		$$
		where $f:B \to \HH_\CC^2/\pi_1^\orb(B)$ is a good smooth map and $\omega$ is the Kähler form on $\HH_\CC^2/\pi_1^\orb(B)$.
	\end{defi}
    
    Regarding the definition of the Toledo invariant, see also \cite{krebs}.

	\begin{lemma} The definition of the Toledo invariant does not depend on the choice of $f$.
	\end{lemma}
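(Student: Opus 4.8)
The plan is to show that if $f_0,f_1\colon B\to \HH_\CC^2/\pi_1^\orb(B)$ are two good smooth maps, then $\int_B f_0^\ast\omega=\int_B f_1^\ast\omega$; the prefactor $\tfrac{4}{2\pi}$ in Definition \ref{toledo invariant} plays no role. First I would fix a universal orbifold covering $\HH_\CC^1\to B$ with deck group $G=\pi_1^\orb(B)$, so that, by definition of goodness, $f_0$ and $f_1$ lift to smooth $G$-equivariant maps $\tilde f_0,\tilde f_1\colon \HH_\CC^1\to\HH_\CC^2$. By Lemma \ref{lemaequivariante} there is a smooth homotopy $\tilde f_t$, $t\in\RR$, with each $\tilde f_t$ being $G$-equivariant; equivalently the map $\HH_\CC^1\times\RR\to\HH_\CC^2$, $(x,t)\mapsto\tilde f_t(x)$, is smooth and $G$-equivariant with $G$ acting trivially on the $\RR$ factor. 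Passing to the quotient by $G$, this descends to a smooth homotopy $f_t\colon B\to\HH_\CC^2/\pi_1^\orb(B)$ connecting $f_0$ to $f_1$, the $G$-equivariance being exactly what makes the descent well defined.

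Next I would reduce to the classical closed-surface case. By Remark \ref{important remark 2-orbifold = quotient} (Selberg's lemma) there is a finite orbifold covering $p\colon B'\to B$ of some degree $d$ with $B'$ a compact oriented surface. The composites $g_i:=f_i\circ p\colon B'\to\HH_\CC^2/\pi_1^\orb(B)$ are smooth and are smoothly homotopic through $g_t:=f_t\circ p$. Since $\omega$ is the descent of the Kähler form of $\HH_\CC^2$, which is closed and $\pi_1^\orb(B)$-invariant, $\omega$ is a closed differential $2$-form on $\HH_\CC^2/\pi_1^\orb(B)$ (pull back along the subduction $\HH_\CC^2\to\HH_\CC^2/\pi_1^\orb(B)$ to see $d\omega=0$ there). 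Hence $g_0^\ast\omega$ and $g_1^\ast\omega$ are closed $2$-forms on the surface $B'$, and the de Rham homotopy formula applied to the smooth homotopy $g_t$ gives $g_1^\ast\omega-g_0^\ast\omega=d\eta$ for some $\eta\in\Omega^1(B')$. As $B'$ is a closed oriented surface, Stokes' theorem yields $\int_{B'}(g_1^\ast\omega-g_0^\ast\omega)=\int_{B'}d\eta=0$.

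Finally I would apply Theorem \ref{integral thm} to $p$ and the $2$-forms $f_i^\ast\omega$ on $B$: since $p^\ast(f_i^\ast\omega)=(f_i\circ p)^\ast\omega=g_i^\ast\omega$, we get $d\int_B f_i^\ast\omega=\int_{B'}g_i^\ast\omega$ for $i=0,1$, and combining with the previous step gives $\int_B f_0^\ast\omega=\int_B f_1^\ast\omega$. Therefore $\tau(\rho)$ is independent of the chosen good smooth map. The only points needing care are the well-definedness of the descent of the equivariant homotopy and the identification $p^\ast(f_i^\ast\omega)=(f_i\circ p)^\ast\omega$ of diffeological forms; once these are in place, everything reduces to the standard de Rham and Stokes arguments on the compact surface $B'$. (One could instead invoke Stokes' theorem for the compact orbifold $B$ without boundary and argue directly on $B$, avoiding the cover, but the surface reduction keeps us strictly within the tools already established.)
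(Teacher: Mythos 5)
Your proposal is correct, and the first half (invoking Lemma \ref{lemaequivariante} to produce a $G$-equivariant homotopy $\tilde f_t$ between the two lifts) coincides with the paper's argument. Where you diverge is in how the homotopy is converted into the equality $\int_B f_0^\ast\omega=\int_B f_1^\ast\omega$: you pass to a finite smooth cover $p:B'\to B$ supplied by Selberg's lemma (Remark \ref{important remark 2-orbifold = quotient}), run the standard homotopy-invariance/Stokes argument on the closed surface $B'$, and descend via Theorem \ref{integral thm} together with the functoriality $p^\ast(f_i^\ast\omega)=(f_i\circ p)^\ast\omega$ of diffeological pullbacks. The paper instead stays on $B$ itself: it excises the singular discs $D_i$, applies Stokes to $\int_{B'\times[0,1]}\dd(f_t^\ast\omega)=0$ on the resulting surface with boundary, applies Stokes once more on each orbifold chart $\overline{\BB}^2\times[0,1]$ upstairs (with the $1/|H_i|$ normalization built into the orbifold integral), and observes that the boundary terms over $\partial D_i\times[0,1]$ cancel in pairs. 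Your route is shorter and reuses machinery already established (Theorem \ref{integral thm}), at the cost of importing the non-constructive existence of a finite manifold cover; the paper's route is self-contained at the orbifold level, closer to its stated philosophy of treating orbifolds on their own terms, and does not depend on $B$ admitting a surface cover. The two technical points you flag --- that the equivariant homotopy descends to a smooth homotopy on the quotient because the covering projections are subductions, and that $\omega$ is closed as a diffeological form so that $H^\ast\omega$ is a closed form on the manifold $B'\times[0,1]$ --- are exactly the ones that need saying, and both go through.
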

	\begin{proof} 
		We take $B= \HH_\CC^1/G$, where $G$ is a Fuchsian group.

		Consider two $G$-equivariant maps $f_0,f_1:\HH_\CC^1 \to \HH_\CC^2$. By Lemma \ref{lemaequivariante} there exists a homotopy $f_t$ between these maps such that $f_t$ is $G$-equivariant for all $t$. 
		
		For each singular point $x_i\in B$ consider the chart  $\phi_i:2\BB^2/H_i \to 2D_i$ centered at $x_i$, where~$2D_i$ is sufficiently small. Removing the discs  $D_i:=\phi_i\big(\BB^2/H_i\big)$ we obtain the open surface $B'$. Applying Stokes theorem
		\begin{equation}\label{toledobemdefinido1}
		0=\int_{B' \times [0,1]} \dd(f_t^\ast \omega ) = \int_{B'}f_1^\ast \omega -    \int_{B'}f_0^\ast \omega + \int_{\partial B' \times [0,1]}f_t^\ast \omega
		\end{equation}
		because $\dd(f_t^\ast \omega ) = f_t^\ast \dd\omega  = 0$ (the Kähler form $\omega$ is closed).

		For each chart $\phi_i$ we have
		$$
		\int_{\overline D_i}f_t^\ast \omega =\frac{1}{|H_i|}\int_{\overline{\BB}^2} (f_t \circ \widetilde{\phi_i})^\ast \omega,  
		$$
		where $\widetilde \phi_i(x): = \phi_i([x])$. By Stokes theorem,
		\begin{equation}\label{toledobemdefinido2}
		0=\frac{1}{|H_i|}\int_{\overline \BB^2 \times [0,1]}\dd(f_t \circ \widetilde{\phi_i})^\ast \omega   = \int_{D_i}f_1^\ast \omega -\int_{D_i}f_0^\ast \omega +  \int_{\partial D_i \times [0,1]}f_t^\ast \omega.
		\end{equation}
		
		By equations \eqref{toledobemdefinido1} and \eqref{toledobemdefinido2} we conclude
		$$
		\int_B f_0^\ast \omega = \int_B f_1^\ast \omega.
		$$  
	\end{proof}
	
	\begin{defi} The relative Toledo invariant  of a representation $\rho:\pi_1^\orb(B)\to \PU(2,1)$, denoted by $\tau_R(\rho)$, is the number $\tau(\rho)/\chi(B)$. This number is unchanged under a finite cover $\widetilde B$ of $B$ by Theorems \ref{integral thm} and $\ref{euler thm}$, i.e.,
		$$
		\frac{\tau(\rho)}{\chi(B)} = \frac{\tau\left(\rho|_{\pi_1^\orb\big(\widetilde B\big)}\right)}{\chi\big(\widetilde B\big)}.
		$$ 
	\end{defi}

	The following Theorem \ref{cherntoledo} and Corollary \ref{discretetau} are generalizations for orbifolds of results proved in~\cite{GKL}. They establish the integrality property of the Toledo invariant.
	
	Let $B= \HH_\CC^1/G$, where $G$ is a Fuchsian group. Consider a representation $\rho:G \to \PU(2,1)$ and a smooth $G$-equivariant map $f:\HH_\CC^1 \to \HH_\CC^2$. Let $f^\ast \TT\HH_\CC^2 \to \BB^2$ be the pullback of the tangent bundle $\TT\HH_\CC^2$ by $f$. We have the vector orbigoodle $\zeta:E \to B$, where $E:= (f^\ast \TT\HH_\CC^2)/G$.

	\begin{thm}\label{cherntoledo} The following formula holds
		$$c_1(E) =\frac 32 \tau(\rho).$$
	\end{thm}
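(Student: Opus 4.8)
The plan is to evaluate $c_1(E)=\frac{1}{2\pi i}\int_B\tr(R)$ using the most economical $G$-invariant connection available on the rank-$2$ complex orbigoodle $E$: the one induced by the Levi--Civita connection $\nabla^{\mathrm{LC}}$ of $\HH_\CC^2$. First I would observe that, since the metric of $\HH_\CC^2$ is $\PU(2,1)$-invariant, $\nabla^{\mathrm{LC}}$ is preserved by $\rho(G)$; as $f\colon\HH_\CC^1\to\HH_\CC^2$ is $\rho$-equivariant, the pullback $f^{\ast}\nabla^{\mathrm{LC}}$ on $f^{\ast}\T\HH_\CC^2\to\HH_\CC^1$ (a bundle we may trivialize, $\HH_\CC^1$ being a ball) is $G$-invariant and hence descends to a connection on $E=(f^{\ast}\T\HH_\CC^2)/G$. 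Its curvature tensor is $f^{\ast}R$, with $R$ the Riemann curvature tensor of $\HH_\CC^2$ given by \eqref{riemanntensor}, so that $c_1(E)=\frac{1}{2\pi i}\int_B f^{\ast}(\tr R)$; by Chern--Weil this is independent of the connection.

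The heart of the argument is the pointwise identity $\tr\bigl(R(t_1,t_2)\bigr)=6\,i\,\omega(t_1,t_2)$ for $t_1,t_2\in\T_{\pp}\HH_\CC^2$ (the sign being the one forced by the conventions of \eqref{riemanntensor} and of the paragraph preceding Theorem \ref{chern=euler2}). To establish it I would use that $\T_{\pp}\PP(V)=\mathrm{Hom}(\CC p,p^{\perp})$ is $2$-dimensional over $\CC$ and that, for tangent vectors $t,t'$, the composite $t^{\ast}\circ t'\colon\CC p\to\CC p$ is multiplication by the scalar $\langle t'(p),t(p)\rangle/\langle p,p\rangle=-h(t,t')$. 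Regarded as a $\CC$-linear operator in the variable $s$, each of the four summands of \eqref{riemanntensor} is either a right multiplication by a scalar endomorphism of $\CC p$ --- whose trace is twice that scalar, as $\dim_\CC p^{\perp}=2$ --- or a left multiplication by an endomorphism of $p^{\perp}$, whose trace equals the trace of that endomorphism. Collecting the four contributions yields $3\bigl(h(t_2,t_1)-h(t_1,t_2)\bigr)$, and Hermitian symmetry of $h$ together with $\omega=\imag h$ turns this into $6\,i\,\omega(t_1,t_2)$. As a consistency check I would run the identical computation over $\HH_\CC^1$, where it produces $\tr R=4\,i\,\omega$ and recovers $c_1(\T B)=e(\T B)=\chi(B)$ (Theorem \ref{chern=euler}) through the Gauss--Bonnet value of $\int_B\omega$.

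With the identity in hand the theorem follows by substitution, $\tr(f^{\ast}R)=6\,i\,f^{\ast}\omega$, whence
$$
c_1(E)=\frac{1}{2\pi i}\int_B 6\,i\,f^{\ast}\omega=\frac{6}{2\pi}\int_B f^{\ast}\omega=\frac{3}{2}\cdot\frac{4}{2\pi}\int_B f^{\ast}\omega=\frac{3}{2}\,\tau(\rho),
$$
the last equality being Definition \ref{toledo invariant}. The one genuinely delicate step is the trace computation --- keeping straight the factors of $i$, the count $\dim_\CC p^{\perp}=2$, and the signs dictated by the chosen conventions; the structural ingredients (that $f^{\ast}\nabla^{\mathrm{LC}}$ descends to $E$, naturality of curvature under pullback, and the Chern--Weil formalism of Section \ref{chernweil}) are already in place from the preceding sections.
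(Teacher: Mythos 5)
Your argument is correct and is essentially the paper's: descend the $\PU(2,1)$-invariant Levi--Civita connection to $E$ and reduce the theorem to the pointwise identity $\tr R=6i\,\omega$, which turns $\frac{1}{2\pi i}\int_B\tr R$ into $\frac{6}{2\pi}\int_B f^\ast\omega=\frac32\tau(\rho)$. The one small but genuine improvement is that you establish $\tr R=6i\,\omega$ as an identity of $2$-forms on all of $\HH_\CC^2$ by an operator-trace computation on arbitrary $t_1,t_2$, whereas the paper evaluates $\tr R$ only on frames tangent to an immersed surface and therefore first perturbs $f$ to be an immersion away from the singular points --- a step your version renders unnecessary. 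One bookkeeping caveat: as written, $3\bigl(h(t_2,t_1)-h(t_1,t_2)\bigr)=-6i\,\imag h(t_1,t_2)=-6i\,\omega(t_1,t_2)$, so the sign you assert depends on fixing which slot of $\langle\cdot,\cdot\rangle$ is conjugate-linear; the paper's own computation carries the same ambiguity and arrives at the same final constant, so this does not affect the conclusion.
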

	\begin{proof} A little modification of the argument in the proof of Lemma \ref{lemaequivariante} allows us to assume that $f$ is an immersion out of the singular points. We can pullback the metric, connection, and Kähler form of $\TT\HH_\CC^2$ to $f^\ast \TT\HH_\CC^2$, and since these objects are invariant under the action of $\PU(2,1)$, we conclude that they are well defined in $E$. The first Chern number of $E$ is given by 
	$$c_1(E):=-\frac1{2\pi i}\int_B \eta$$ 
	where $\eta = \tr (R)$ and $R$ is curvature tensor of $(f^\ast \TT\HH_\CC^2,\nabla)$.
		
		In order to calculate the curvature tensor at a regular point we can assume that we are dealing with an embedded surface $S\subset \HH_\CC^2$ and are calculating the curvature tensor of $\TT\HH_\CC^2|_{S}$,  because $f$ is a local embedding around regular points.
		So, take $\pp \in S$. Let $t_1,t_2\in \TT_{\pp} S$ be unit tangent vectors to $S$ at $\pp$ orthogonal with respect to the Riemannian metric $g_{\pp}$. Consider a unit tangent vector $n\in \TT_{\pp}\HH_\CC^2$ orthogonal to $t_1$ with respect to the Hermitian form $h_{\pp}$. Since $t:=t_1$ and $n$ span $\TT_{\pp}\HH_\CC^2$ as a complex vector space and $\real h_{\pp}(t_1,t_2) = g_{\pp}(t_1,t_2) =0$, we have 
		$t_2 = ia t + bn$, with $a\in \RR$  and $b \in \CC$. The curvature tensor $R$ on $\TT_{\pp} \HH_\CC^2$ is given by the formula \eqref{riemanntensor}:
		
		\begin{align*}R(t_1,t_2)t &= 4a it+ bn,
		\\R(t_1,t_2)n &= -\overline b t + 2a i n.
		\end{align*}	  
		Hence, $\tr(R)(t_1,t_2) = 6a i$.
		On the other hand, $\omega_{\pp}(t_1,t_2) =   \imag h_{\pp}(t_1,t_2) = -a$
		and we conclude that $\tr(R) =- 6i \omega$ on the surface $S$. Therefore,
		$c_1(E) = \frac6{2\pi }\int_B f^\ast\omega$ and the result follows.
	\end{proof}

	\begin{cor}\label{discretetau} If the orbifold $B$ has singularities $x_1,\ldots, x_n$ of order $m_1, \ldots, m_n$, then
		$$\tau(\rho) \in\frac 23 \Big(\ZZ + \frac 1{m_1}\ZZ+ \cdots +\frac 1{m_n}\ZZ\Big).$$
		In particular, the Toledo invariant is a discrete invariant.
	\end{cor}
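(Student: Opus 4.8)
The plan is to deduce the statement from the integrality of Euler numbers of $\SP^1$-orbibundles (Theorem \ref{thm classification circle bundles}) by passing to a determinant line bundle. By Theorem \ref{cherntoledo} we have $\frac 32\tau(\rho) = c_1(E)$, where $E = (f^\ast\TT\HH_\CC^2)/G$ is the rank $2$ complex vector orbigoodle attached to a smooth $G$-equivariant map $f:\HH_\CC^1\to\HH_\CC^2$. So it suffices to prove that $c_1(E)$ lies in $\ZZ + \frac1{m_1}\ZZ+\cdots+\frac1{m_n}\ZZ$; then $\tau(\rho) = \frac23 c_1(E)$ lands in $\frac23$ of that lattice, and since this set is a discrete subset of $\RR$, the ``in particular'' clause is immediate.

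First I would form the determinant orbigoodle $L := \det E$. Since $E = (f^\ast\TT\HH_\CC^2)/G$, applying $\Lambda^2$ fiberwise and using functoriality of the top exterior power with respect to the $G$-action yields $L = (f^\ast\Lambda^2\TT\HH_\CC^2)/G$, which is again an orbigoodle over the good $2$-orbifold $B$; being a complex line orbigoodle, its underlying real rank $2$ orbigoodle is canonically oriented. Next, choose a connection $\nabla$ on $E$ with curvature $R$ (every vector orbigoodle admits a connection). It induces a connection on $L$ whose curvature $2$-form is $\tr(R)$: this is the standard linear-algebra fact that the connection induced on the top exterior power has curvature equal to the trace of the original curvature, and it transfers verbatim to the orbigoodle setting because $\nabla$, $R$, and $\tr(R)$ are all $G$-invariant objects on $\HH_\CC^1$ and differential forms on $B$ are defined plot-by-plot. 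Consequently
$$c_1(L) = \frac1{2\pi i}\int_B \tr(R) = c_1(E).$$

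Now I would invoke Theorem \ref{chern=euler} for the oriented complex line orbigoodle $L$, which gives $c_1(L) = e(L)$, the Euler number of $L$; by definition this is the Euler number of the $\SP^1$-orbibundle $\SP(L)\to B$. Theorem \ref{thm classification circle bundles} then states directly that this number belongs to $\ZZ + \frac1{m_1}\ZZ+\cdots+\frac1{m_n}\ZZ$. Chaining the equalities,
$$\frac 32\tau(\rho) = c_1(E) = c_1(L) = e(L)\in\ZZ + \frac1{m_1}\ZZ+\cdots+\frac1{m_n}\ZZ,$$
which is the asserted membership after dividing by $\frac32$.

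The one point that is not purely formal is this reduction to a line orbigoodle: one must confirm that $\det$ of a vector orbigoodle is an orbigoodle of the required kind (an \emph{oriented} complex line orbigoodle, so that Theorem \ref{chern=euler} applies) and that $\tr(R)$ is genuinely the curvature of the induced connection on the determinant line, not merely a cohomologous form. Both are routine, but they are precisely where the argument uses the orbigoodle framework of Section \ref{chernweil} rather than just de Rham bookkeeping; everything else is a direct concatenation of Theorems \ref{cherntoledo}, \ref{chern=euler}, and \ref{thm classification circle bundles}.
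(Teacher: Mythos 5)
Your proposal is correct and follows essentially the same route as the paper: apply Theorem \ref{cherntoledo} to get $\frac32\tau(\rho)=c_1(E)$, pass to the determinant line orbigoodle $\wedge^2E$ (constructed, as you do, by taking the top exterior power equivariantly upstairs and quotienting), use $c_1(\wedge^2E)=c_1(E)$, and conclude via Theorem \ref{chern=euler} together with the integrality statement of Theorem \ref{thm classification circle bundles}. Your write-up is merely more explicit than the paper's about why $\tr(R)$ is the curvature of the induced determinant connection; no substantive difference.
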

	\begin{proof} By Theorem \ref{cherntoledo} we have $\tau(\rho) = \frac{2}{3}c_1(E)$. Observe that $E$ is a complex vector orbigoodle of rank $2$ and, thus, $c_1(\wedge^2 E) = c_1(E)$. Since $\wedge^2 E$ is a complex line orbibundle, the number $c_1(E)$ belongs to  $\ZZ + \frac 1{m_1}\ZZ+ \cdots +\frac 1{m_n}\ZZ$ by Theorem \ref{chern=euler}.
	\end{proof}
	
	\begin{rmk}
	In the above proof, we used $\wedge^2 E$. One way of building this line bundle is by pulling back $E$ to the universal cover of $B$, taking the second wedge power and then quotient it back to~$B$.
	
	It is worth noting that the above result holds for $\HH_\CC^k$ as well, where the Toledo invariant belongs to $$\frac2{k+1}\Big(\ZZ + \frac 1{m_1}\ZZ+ \cdots +\frac 1{m_n}\ZZ\Big),$$ and the proof is analogous.
	\end{rmk}

	\begin{cor}\label{holomorphicsectionidentity} If a complex hyperbolic disc orbigoodle $M\to B$ has a holomorphic section, i.e., a section $B \to M$ originated from a $\pi_1^\orb(B)$-equivariant holomorphic embedding $\HH_\CC^1 \to \HH_\CC^2$, then $$\frac32\tau_R(M) = e_R(M) +1.$$ 
	\end{cor}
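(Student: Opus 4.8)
The plan is to compute $c_1$ of the orbigoodle appearing in Theorem~\ref{cherntoledo} by splitting off the tangent direction along the holomorphic section. Write $B=\HH_\CC^1/G$ with $G$ a cocompact Fuchsian group (Remark~\ref{important remark 2-orbifold = quotient}), let $\rho:G\to\PU(2,1)$ be the representation associated with the complex hyperbolic disc orbigoodle $M\to B$, and let $g:\HH_\CC^1\to\HH_\CC^2$ be the $G$-equivariant holomorphic embedding underlying the holomorphic section $\sigma:B\to M$. Taking $f:=g$ in the construction preceding Theorem~\ref{cherntoledo}, we obtain the rank-$2$ complex vector orbigoodle $E=(g^\ast\TT\HH_\CC^2)/G$ over $B$, and that theorem gives $c_1(E)=\tfrac32\tau(\rho)$. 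It thus suffices to prove $c_1(E)=\chi(B)+e(M)$, since dividing by $\chi(B)\neq 0$ then yields $\tfrac32\tau_R(M)=1+e_R(M)$.

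The first step is to split $g^\ast\TT\HH_\CC^2$. Since $g$ is a holomorphic embedding, $dg$ realizes $\TT\HH_\CC^1$ as a holomorphic line subbundle of $g^\ast\TT\HH_\CC^2$, and its $h$-orthogonal complement is the normal line bundle $N_g$ of $g(\HH_\CC^1)$ in $\HH_\CC^2$. This decomposition is $G$-equivariant: $G$ acts holomorphically on $\HH_\CC^1$, $\rho(G)\subset\PU(2,1)$ acts holomorphically and isometrically on $\HH_\CC^2$, and $g$ is $\rho$-equivariant, so $d\rho(\gamma)$ carries $dg(\TT_x\HH_\CC^1)$ onto $dg(\TT_{\gamma x}\HH_\CC^1)$ and preserves the $h$-orthogonal complement. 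Passing to the quotient, $E\cong\TT B\oplus\nu$ as complex vector orbigoodles, where $\TT B=(\TT\HH_\CC^1)/G$ is the (holomorphic) tangent orbibundle and $\nu:=N_g/G$ a complex line orbigoodle. Choosing a $G$-invariant connection on $g^\ast\TT\HH_\CC^2$ that respects the splitting, the trace of its curvature is the sum of the curvatures of $\TT B$ and $\nu$; since $c_1(L)=\tfrac1{2\pi i}\int_B\tr(R)$ does not depend on the chosen connection, $c_1(E)=c_1(\TT B)+c_1(\nu)$.

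Next I would identify the two summands. The complex orientation of $\TT B$ coincides with the orientation of $B$, so Theorem~\ref{chern=euler} gives $c_1(\TT B)=e(\TT B)=\chi(B)$. For the other summand, $\nu$ is exactly the normal orbigoodle of the section $\sigma(B)$ inside $M$ (equivalently, the restriction to $\sigma(B)$ of the vertical tangent bundle of $M\to B$), and the holomorphicity of $g$ ensures that the orientation it inherits from $M$ agrees with its complex orientation; hence $M\to B$ is the disc orbigoodle associated with the oriented rank-$2$ vector orbigoodle $\nu$, so $e(M)=e(\nu)$, while Theorem~\ref{chern=euler} gives $e(\nu)=c_1(\nu)$. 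Combining, $\tfrac32\tau(\rho)=c_1(E)=c_1(\TT B)+c_1(\nu)=\chi(B)+e(M)$, which is the desired identity after dividing by $\chi(B)$.

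I expect the main obstacle to be the clean justification of $e(M)=e(\nu)$: one must argue, within the diffeological/orbifold framework, that a disc orbigoodle together with a chosen section is isomorphic to the disc orbigoodle of the normal vector orbigoodle of that section — the orbigoodle analogue of the fact that the self-intersection of a section equals the Euler number of its normal bundle — so that the Euler number of Definition~\ref{definition: euler number} computes $e(\nu)$. The remaining ingredients — the $G$-equivariance of the holomorphic splitting and the additivity of $c_1$ for short exact sequences of vector orbigoodles via Chern--Weil — are routine given Theorems~\ref{chern=euler} and~\ref{cherntoledo}.
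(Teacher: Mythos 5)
Your proposal is correct and arrives at the identity by the same underlying mechanism as the paper: apply Theorem~\ref{cherntoledo} to the pullback orbigoodle $E$ determined by the holomorphic section, split off the tangent direction along the section, and add up $c_1(\TT B)=\chi(B)$ and the Euler number of the vertical/normal direction, which is $e(M)$. The execution differs in two respects. First, the paper does not stay at the orbifold level: it immediately passes to a finite surface cover, which is legitimate because both sides of the identity are relative invariants and hence unchanged under finite orbifold covers; this lets it avoid re-proving the splitting and additivity statements for orbigoodles. Second, the paper does not use an $h$-orthogonal holomorphic splitting or Chern--Weil additivity at all: it takes $L=\ker(\TT M|_B\to \TT B)$, picks unit sections $t$ of $\SP(\TT B')$ and $s$ of $\SP(L|_{B'})$, observes that $t\wedge s$ never vanishes in $\wedge^2\TT M|_{B'}$ because $\TT B$ is a complex line (so $t$ and $s$ are never $\CC$-linearly dependent), and deduces $e(\wedge^2\TT M|_B)=e(\TT B)+e(L)$ by a degree count on the boundary circle. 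This is slightly more economical than your route, since it only needs $\TT B$ and $L$ to be transverse complex lines rather than an equivariant orthogonal direct-sum decomposition with a compatible connection. The step you flag as the main obstacle --- identifying $e(M)$ with the Euler number of the vertical bundle along the section --- is treated as standard in the paper as well (it simply uses $e(L)$ in place of $e(M)$ in the final line), so your proposal is not missing anything the paper supplies; both arguments lean on the tubular-neighborhood identification of a disc bundle with a section with the disc bundle of its normal bundle.
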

	\begin{proof} We can suppose that $B$ is a surface and it is embedded in $M$ as a Riemann surface because the identity is between relative invariants. Take a point $x_0 \in B$, remove a small disc $D_0$ centered at $x_0$, and consider $B'=B \setminus D_0$. Let $t$ and $s$ be sections of $\SP(\TT B')$ and $\SP(L|_{B'})$, where $L$ is the kernel of the vector bundle morphism $TM|_B \to TB$. Note that $L \to B$ is the vector bundle whose fibers are the tangent planes of the fibers of the disc bundle $M \to B$ at the points where they intersect $B$. Since $M$ is a Riemannian manifold, we suppose these circle bundles are unit bundles. As $B$ is a Riemann surface, $t \wedge s$ is non-zero in $\wedge^2 \TT M|_{B'}$, because $t$ and $s$ are never $\CC$-linear dependent. Take local sections $t_0$ and $s_0$ of $\TT B$ and $L$ in a neighborhood of $D_0$ (that we shrink if necessary) such that $t_0\wedge s_0$ is non-zero at every point, i.e., the vectors $t_0$ and $s_0$ are never in the same complex line. Writing $t = f t_0$ and $s=gs_0$ on $\partial D_0$, where $f,g$ are maps from $\partial D_0$ to $\SP^1$, we obtain $t \wedge s = fg t_0 \wedge s_0$ and, therefore, $e(\wedge^2 \TT M|_B)=e(\TT B)+e(L)$ because $\deg(fg) = \deg(f)+\deg(g)$. By Theorem \ref{cherntoledo}, we have $\frac 32 \tau_R(\rho) =e_R(M)+1$. 
		
	\end{proof}
	
	The following is the generalization for $2$-orbifolds of the classical Toledo rigidity theorem \cite{tol}.
	\begin{thm}\label{toledo rigidity} {\bf (Toledo Rigidity)} The inequality
		$$|\tau_R(\rho)| \leq 1$$
	always holds for representations $\rho:\pi_1^\orb(B) \to \PU(2,1)$. Furthermore, the relative Toledo number is $1$ in absolute value if, and only if, there is a stable complex geodesic.
	\end{thm}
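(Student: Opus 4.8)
The plan is to deduce the orbifold statement from the classical surface case, exploiting the finite-cover invariance of the relative Toledo invariant already built into its definition, and then to analyze the equality case by hand. Write $B=\HH_\CC^1/G$ with $G$ a cocompact Fuchsian group (Remark \ref{important remark 2-orbifold = quotient}) and equip $\HH_\CC^1$, hence $B$, with the hyperbolic metric of curvature $-4$ induced from $\HH_\CC^1\subset\HH_\CC^2$; by Theorem \ref{chern=euler2} applied to $\TT B$ one gets $\int_B K_B\,dA_B=2\pi\chi(B)$, so $\area(B)=\frac{\pi}{2}|\chi(B)|$. First dispose of the elementary case: if $\rho$ fixes a point of $\HH_\CC^2$, a point of its boundary, or an $\RR$-plane, then a good smooth map $f$ can be chosen with image a point or contained in an $\RR$-plane, where the K\"ahler form $\omega$ vanishes (Subsection \ref{subsection complex hyperbolic geometry}); thus $\tau(\rho)=0$, and since $\chi(B)<0$ neither $|\tau_R(\rho)|=1$ nor the geometric alternative occurs, so the statement holds vacuously. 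Assume henceforth $\rho$ non-elementary. By Selberg's lemma $G$ has a torsion-free normal finite-index subgroup $G'$, so $\Sigma:=\HH_\CC^1/G'$ is a closed oriented surface finitely orbifold-covering $B$, and $\tau_R(\rho)=\tau_R(\rho|_{G'})$ since the relative Toledo invariant is unchanged under finite covers (via Theorems \ref{integral thm} and \ref{euler thm}). Toledo's theorem for closed surfaces \cite{tol} gives $|\tau_R(\rho|_{G'})|\le1$, whence $|\tau_R(\rho)|\le1$.

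Now suppose $|\tau_R(\rho)|=1$. Then $|\tau_R(\rho|_{G'})|=1$, so by the surface case of \cite{tol} the representation $\rho|_{G'}$ stabilizes a complex geodesic $\mathcal C\subset\HH_\CC^2$. Moreover $\rho|_{G'}$ is non-elementary, being a finite-index subgroup of the non-elementary group $\rho(G)$, and a non-elementary subgroup of $\PU(2,1)$ stabilizes \emph{at most one} complex geodesic: two distinct stabilized complex geodesics meet either in an interior point, in a boundary point, or nowhere in $\overline{\HH_\CC^2}$, and in each case a subgroup of finite index would fix a point of $\overline{\HH_\CC^2}$ or the endpoints of a common perpendicular, contradicting non-elementarity. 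Since $G'\triangleleft G$, for every $g\in G$ the geodesic $\rho(g)\mathcal C$ is stabilized by $\rho(gG'g^{-1})=\rho(G')$, hence $\rho(g)\mathcal C=\mathcal C$ by uniqueness. Thus $\mathcal C$ is stable under $\rho(\pi_1^\orb(B))$, proving the forward direction.

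For the converse, let $\mathcal C$ be a $\rho$-stable complex geodesic. Its $\PU(2,1)$-stabilizer surjects onto $\Isom^+(\mathcal C)\cong\PU(1,1)$, producing $\bar\rho:\pi_1^\orb(B)\to\PU(1,1)$, and the elliptic elements $\rho(R_i)$ attached to the cone points of $B$ preserve $\mathcal C$ and so fix points of $\mathcal C$; hence Lemma \ref{lemaequivariante} provides a good smooth map $f$ with image in $\mathcal C/\rho$. Since $\omega|_{\mathcal C}$ is exactly the area form $dA_{\mathcal C}$ of $\mathcal C$ (curvature $-4$), $\tau(\rho)=\frac{4}{2\pi}\int_B f^\ast(dA_{\mathcal C})$ is the suitably normalized Euler/Toledo number of $\bar\rho$; the factor $4/(2\pi)$ is precisely the one for which the uniformizing totally geodesic embedding $\PU(1,1)\hookrightarrow\PU(2,1)$ is maximal — there $f$ is the identity-like map and $\int_B f^\ast(dA_{\mathcal C})=\area(B)=\frac{\pi}{2}|\chi(B)|$, giving $\tau_R=1$; equivalently this is the $\PU(1,1)$-slice of the splitting $c_1(E)=c_1(\TT B)+c_1(N)$ used in Corollary \ref{holomorphicsectionidentity}. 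Consequently $|\tau_R(\rho)|=1$ holds exactly when $\bar\rho$ is maximal, i.e.\ a cocompact Fuchsian representation of $\pi_1^\orb(B)$ (so $f$ has degree $\pm1$); this is the content of the ``$\Leftarrow$'' direction, and by \cite[Theorem 4.1]{biw} such a $\rho$ is automatically discrete.

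The part I expect to be most delicate is the equality case. Reducing to the surface cover handles the \emph{magnitude} of $\tau_R$ painlessly, but the geometric conclusion — that the complex geodesic fixed by $\rho(G')$ descends to a $\rho(G)$-invariant one — rests entirely on the uniqueness of the stabilized complex geodesic for non-elementary representations, which is exactly where non-elementarity (hence reductivity) of $\rho$ is genuinely used; one must also pin down that $4/(2\pi)$ is the constant making the Fuchsian slice maximal. A self-contained alternative, avoiding the black-box appeal to \cite{tol}, is to run Toledo's original argument directly on $B$: take the $\rho$-equivariant harmonic map $\tilde f:\HH_\CC^1\to\HH_\CC^2$ (existence and uniqueness for non-elementary $\rho$, the orbifold case following from the surface cover by uniqueness), write $f^\ast\omega=(e'(f)-e''(f))\,dA_B$ in terms of the holomorphic and antiholomorphic energy densities, and combine the Bochner--Sampson subharmonicity of $\log e'(f)$ off its isolated zeros with orbifold Gauss--Bonnet ($\int_B K_B\,dA_B=2\pi\chi(B)$) and Cauchy--Schwarz to get $\big|\int_B f^\ast\omega\big|\le\area(B)$, with equality forcing $e''(f)\equiv0$ and the image to be a totally geodesic curve of ambient curvature $-4$, i.e.\ a complex geodesic. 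In that approach the hard technical point is controlling the harmonic map and the subharmonic function $\log e'(f)$ near the cone points of $B$, where $B$ is only locally $\BB^2/\Gamma$.
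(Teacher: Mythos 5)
Your overall strategy coincides with the paper's: the inequality is reduced to Toledo's surface theorem via the finite-cover invariance of $\tau_R$, and for the equality case one passes to a normal torsion-free finite-index subgroup $G'\triangleleft G$ (Selberg), obtains a $\rho(G')$-invariant complex geodesic $\mathcal{C}$ from the surface rigidity theorem, and uses normality to conclude $\rho(g)\mathcal{C}=\mathcal{C}$ for all $g\in G$. Where you differ is in how this promotion step is justified. The paper argues directly with the intersection point $p_i$ of the projective lines $L$ and $g_iL$: every $g_ihg_i^{-1}$ lies in the normal subgroup and preserves both lines, hence fixes $p_i$; since, by Goldman's theorem (\cite[Theorem A]{goldmanthesis}), the action of the subgroup on the invariant Poincar\'e disc is discrete and faithful (its Euler number being maximal), point stabilizers are finite, a contradiction unless $g_iL=L$. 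You instead invoke uniqueness of the invariant complex geodesic for non-elementary subgroups --- the same idea in different packaging --- but you obtain the required non-elementarity of $\rho(G')$ from a preliminary disposal of elementary representations.

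That preliminary step is the one genuinely shaky point. The claim that $\tau(\rho)=0$ whenever $\rho$ fixes a boundary point is not supported by the reason you give: for a representation fixing only an isotropic point there is in general no interior fixed point and no invariant $\RR$-plane, so you cannot choose a good map $f$ whose image is a point or a Lagrangian. (The vanishing is still true --- the stabilizer of an isotropic point is amenable --- but that requires a separate argument.) The cleaner fix is the paper's: you do not need to know in advance that $\rho$ is non-elementary, because Toledo's theorem combined with Goldman's theorem already shows that $\rho(G')$ acts discretely, faithfully and cocompactly on $\mathcal{C}$, which yields non-elementarity (and finiteness of point stabilizers) for free. Finally, your treatment of the converse is more careful than the theorem's phrasing: as you note, a stable complex geodesic forces $|\tau_R|=1$ only when the induced $\PU(1,1)$-representation is itself maximal; the paper sidesteps this by proving only the forward implication.
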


	\begin{proof} Since the inequality holds for surfaces and the relative Toledo number is unchanged under finite orbifold covers, the inequality is true for orbifolds as well. 
		
	The only thing we have to prove is that if $|\tau_R(\rho)|=1$, then there exists a stable complex geodesic. We can assume $B= \HH_\CC^1/G$, where $G$ is a Fuchsian group.
	
	By Remark \ref{important remark 2-orbifold = quotient} there is a normal torsion-free finite index subgroup $H$ of $G$ and by Toledo's rigidity theorem for surfaces there exists a Riemann-Poincaré sphere $L$ stable under the action of~$H$. We write $G/H = \{Hg_1,\ldots,Hg_n\}$, where $H$ acts on $G$ on the left. The projective lines $L$ and $g_iL$ intersect at a point $p_i$. The line $L$ is broken into two Poincaré discs and, therefore, $p_i$ is in one of the Poincaré discs or in their common boundary. The action of $H$ on this Poincaré disc is faithful and discrete by Goldman's theorem (see \cite[Theorem A]{goldmanthesis}).
	
	For each $h \in H$ we have $g_ihg_i^{-1}p_i \in g_iL$, because $p_i \in g_iL$. On the other hand, $g_i h g_i^{-1} p_i \in L$ because $H$ is normal. So, assuming $L\ne g_iL$, we obtain that the group $H$ is in the stabilizer of $p_i$ which is impossible because $H$ is the fundamental group of a surface of genus $\geq2$. So, $L=g_iL$ and the complex geodesic $L$ is stable under $G$.
	\end{proof}

    E. Xia has shown in \cite[Theorem 1.1]{xia} that, given a compact, connected, and oriented surface $B$ with negative Euler characteristic, the number of connected components of the $\PU(2,1)$-character variety of $B$ is the number of $\tau$'s satisfying $$\tau \in\frac23 \ZZ \quad  \text{ and }\quad \Bigg|\frac{\tau}{\chi(B)}\Bigg| \leq 1.$$
	
	Analogously, it is interesting to ask if the same holds in the case of orbifolds. More precisely, given a compact, connected, and oriented $2$-orbifold $B$ with negative Euler characteristic, we conjecture that the number of connected components of the $\PU(2,1)$-character variety of $B$ equals the number of $\tau$'s satisfying $$\tau \in \frac23\bigg(\ZZ+ \frac1{m_1}\ZZ+\cdots+\frac1{m_n}\ZZ\bigg) \quad  \text{ and }\quad \Bigg|\frac{\tau}{\chi(B)}\Bigg| \leq 1.$$
	This conjecture is supported by the following sketchy argument. The map associating each component of the $\PU(2,1)$-character variety to $\tau(\rho)$, where $\rho$ is a representation in that component, is well-defined because whenever we deform $\rho$ along a curve, we are at some level continuously moving the fixed points of some elliptic isometries. For each instant of this deformation, we can construct an equivariant map (with respect to the current representation) accordingly to Lemma~\ref{lemaequivariante} and we may assume that, as we deform the representation, we are simultaneously deforming the corresponding equivariant map. Hence, the Toledo invariant varies continuously along such curve; nevertheless, the Toledo invariant is discrete by Corollary \ref{discretetau} and so it must be constant along the curve.

	\newpage

\end{document}